\numberwithin{equation}{section}
\def\3bar{{|\hspace{-.02in}|\hspace{-.02in}|}}
\def\E{{\mathcal{E}}}
\def\T{{\mathcal{T}}}
\def\dQ{{\mathbb{Q}}}
\def\bQ{{\mathbf{Q}}}
\def\b0{\boldsymbol{0}}
\def\bphi{\boldsymbol{\phi}}            
\def\bpsi{\boldsymbol{\psi}}            
\def\sumT{\sum_{T\in\mathcal{T}_h}}     
\def\sumE{\sum_{e\in\E_h^0}}            
\def\bw{{\mathbf{w}}}
\def\bu{{\mathbf{u}}}
\def\bv{{\mathbf{v}}}
\def\bn{{\mathbf{n}}}
\def\be{{\mathbf{e}}}
\def\bf{{\mathbf{f}}}
\def\bg{{\mathbf{g}}}
\newtheorem{theorem}{Theorem}[section]
\newtheorem{algorithm}[theorem]{Conforming Discontinuous Galerkin Algorithm}
\newtheorem{lemma}{Lemma}[section]
 \newcommand{\eps}{\varepsilon}
 \newcommand{\Real}{\mathbb{R}}
 \newcommand{\set}[1]{\left\{#1\right\}}
 \newcommand{\seq}[1]{\left<#1\right>}
 \newcommand{\trb}[1]{|\!|\!|#1|\!|\!|}
\journal{journal of computational and applied mathematics}
\begin{document}

\begin{frontmatter}

\title{A conforming discontinuous Galerkin finite element method for Brinkman equations}

\author[mymainaddress]{Haoning Dang}
\author[mymainaddress]{Qilong Zhai\corref{mycorrespondingauthor}}
\cortext[mycorrespondingauthor]{Corresponding author}
\ead{zhaiql@jlu.edu.cn}
\author[mythirdaddress]{Zhongshu Zhao}
\address[mymainaddress]{School of Mathematics, Jilin University,  Changchun, China.}
\address[mythirdaddress]{School of Mathematical Sciences and Institute of Natural Sciences, Shanghai Jiao Tong University, Shanghai, China.} 

\begin{abstract}
 
In this paper, we present a conforming discontinuous Galerkin (CDG) finite element method for Brinkman equations. The velocity stabilizer is removed by employing the higher degree polynomials to compute the weak gradient. The theoretical analysis shows that the CDG method is actually stable and accurate for the Brinkman equations. Optimal order error estimates are established in $H^1$ and $L^2$ norm. Finally, numerical experiments verify the stability and accuracy of the CDG numerical scheme.
\end{abstract}

%

\begin{keyword}


Brinkman equations, discontinuous Galerkin, discrete weak gradient operators, polyhedral meshes. 
\end{keyword}

\end{frontmatter}


\section{Introduction}
The Brinkman equations frequently appears in modeling the incompressible flow of a viscous fluid in complex porous medium. These equations extend Darcy's law to describe the dissipation of kinetic energy caused by viscous forces, similarly to the Navier-Stokes equations \cite{Brinkman1949ACO}. Brinkman equations are also applied in many other fields, such as environmental science, geophysics, petroleum engineering, biotechnology, and so on \cite{Avramenko2020,FERREIRA2020103696,fractured,CarbonateReservoirs,biotechnology}.  

Mathematically speaking, the Brinkman equations combine the Darcy equations and the Stokes equations by a highly varied parameter. For simplicity, we consider the following Brinkman equations in a bounded polygonal domain $\Omega\in\Real^2$: find the unknown fluid velocity $\bu$ and pressure $p$ satisfying 
\begin{align}
    -\mu\Delta\bu+\mu\kappa^{-1}\bu+\nabla p&=\bf\quad\text{in}~\Omega, \label{Eq:Brinkman1}\\ 
    \nabla\cdot\bu&=0\quad\text{in}~\Omega, \label{Eq:Brinkman2}\\
    \bu&=\b0\quad\text{on}~\partial\Omega, \label{Eq:Brinkman3}    
\end{align}
where $\kappa$ is permeability tensor, and $\mu$ is the fluid viscosity coefficient. For the convenience of analysis, we assume the permeability tensor $\kappa$ is piecewise constant. Since $\mu$ can be used to scale the solution $\bu$, we can take $\mu=1$ for simplicity. $\bf$ represents the momentum source term. In addition, assume that there exist two positive constants $\lambda_1$ and $\lambda_2$ such that
\begin{align}
    \lambda_{1}\xi^{t}\xi\le\xi^{t}\kappa^{-1}\xi\le\lambda_{2}\xi^{t}\xi,\quad\forall\xi\in\Real^{d}. \label{prop_kappa}
\end{align}

The main challenge of designing the numerical algorithm comes from the different regularity requirements of the velocity in such two extreme cases: the variational form of Brinkman equations in the Stokes limit requires $H^1$-regularity, while the Darcy limit requires $H(\text{div})$-regularity. To overcome this difficulty, many scholars have made extensive research. A natural attempt is to modify the existing Stokes or Darcy elements. We can easily find corresponding work related to Stokes based elements \cite{Stokesbased1,Stokesbased2,Stokesbased3} and Darcy based elements \cite{Darcybased2,Darcybased}. Another strategy is to design new formulations for Brinkman equations, such as the dual-mixed formulation \cite{dualmixed,doi:10.1142/S0218202511005726}, the pseudostress-velocity formulation \cite{QIAN2020113177} and the vorticity-velocity-pressure formulation \cite{velocity-vorticity-pressure}. In addition, some new numerical methods are introduced to Brinkman equations, such as weak Galerkin methods \cite{BRINK1,BRINK2}, virtual element methods \cite{doi:10.1142/S0218202517500142,Wang2021,ZHANG2021126325} and much more.

The purpose of this paper is to introduce a new conforming discontinuous Galerkin (CDG) method for Brinkman equations. The CDG method based on the weak Galerkin (WG) method proposed in \cite{ELLIPTIC}, is first proposed by Ye and Zhang in 2020 \cite{Ye2020110}. It retains the key idea of WG method, which uses the weak differential operators to approximate the classical differential operators in the variational form. In \cite{Ye2020110}, the authors prove that no stabilizer is required for Poisson problem when the local Raviart-Thomas (RT) element is used to approximate the classic gradient operator. However, we know that the RT elements are only applicable to triangular and rectangular meshes. In subsequent work \cite{JMIN1}, they found that the stabilizer term can be removed from the numerical scheme constructed on polygon meshes by raising the degree of polynomial approximating the discrete weak gradient operator. The CDG method has been applied to Stokes equations \cite{cdgstokes}, elliptic interface problem \cite{CDGellipticinterface} and linear elasticity interface problem \cite{CDGLinearElasticityInterface}.

In this paper, we consider the same variational form based on gradient-gradient operators as \cite{BRINK2}: find the unknown functions $\bu\in[H^1_0(\Omega)]^d$ and $p\in L^2_0(\Omega)$ satisfying
\begin{align}
    (\nabla\bu,\nabla\bv)+(\kappa^{-1}\bu,\bv)+(\nabla p,\bv)&= (\bf,\bv), &&\forall \bv\in[H^1_0(\Omega)]^d, \label{VF2.1}\\
    (\nabla q,\bu)&=0, &&\forall q\in L^2_0(\Omega). \label{VF2.2}
\end{align}
We construct the conforming discontinuous Galerkin scheme to discrete this variational form on polygon meshes. The stable term for the velocity $\bu$ is removed by a new definition of weak gradient operator, thus our numerical formulation is simpler compared with the standard WG method \cite{BRINK2}. Furthermore, we prove the well-posedness of the CDG scheme and derive the optimal error estimates for velocity and pressure, which implies the optimal convergence order for both the Stokes and Darcy dominated problems. Some numerical experiments are provided to verify our theoretical analysis.

The rest of the paper is organized as follows. In Section \ref{WEAKOPERATOR}, we define two discrete weak gradient operators and 
construct the conforming discontinuous Galerkin scheme for Brinkman equations. Then, the well-posedness is proved in Section \ref{CDG:SCHEME}. The error equations for the CDG scheme are established in Section \ref{CDG:ERROR_EQUATION}. And we prove optimal error estimates for both velocity and pressure in $H^1$ and $L^2$ norms in Section \ref{CDG:ERROR_ESTIMATE}. In Section \ref{EXAMPLE}, we present some numerical experiments to verify the stability and accuracy of the CDG scheme. 

\section{Discrete Weak Gradient Operators} \label{WEAKOPERATOR}
In this section, we define two discrete weak gradient operators that we're going to use later.

Let $\T_h$ be a polygonal or polyhedral partition of the domain $\Omega$ and $\E_h$ be the set of all edges or faces in $\T_h$. Assume that all cells in $\T_h$ are closed and simply connected, and satisfy some specific shape regular conditions in \cite{MESH2}. Denote the set of all interior edges or faces by $\E_h^{0}=\E_h\backslash\partial\Omega$. For each $T\in\T_h$, $e\subset\partial T$, let $h_T$ and $h_e$ be the diameter of $T$ and $e$, respectively. And we define the size of $\T_h$ as $h=\max\limits_{T\in\T_h}h_T$. 

For a given integer $k\ge1$, the space of polynomial with degree no more than $k$ on a cell $T$ denotes by $P_k(T)$. We define the space for the vector-valued functions as 
\begin{align*}
    V_h=\set{\bv\in [L^2(\Omega)]^d:\bv|_T \in[P_k(T)]^d, \forall T\in\T_h}.
\end{align*}
Denote by $V_h^0$ the subspace of $V_h$ that
\begin{align*}
    V_h^0=\set{\bv:\bv\in V_h, \bv=\b0~\text{on}~\partial\Omega}.
\end{align*}

For the scalar-valued functions, we define
\begin{align*}
    W_h=\set{q: q\in L^2_0(\Omega),q|_T\in P_{k-1}(T)}.
\end{align*}

Let $T_1$ and $T_2$ be two cells in $\T_h$ sharing $e\in\E_h^0$, $\bn_1$ and $\bn_2$ be the unit outward normal vectors of $T_1$ and $T_2$ on $e$. In particular, when $e\subset\partial\Omega$, we denote the unit outward normal vector of $T$ on $e$ by $\bn_e$. For a vector-valued function $\bv\in V_h+[H^1(\Omega)]^d$, we define the average $\set{\cdot}$ and jump $[\cdot]$ as follows
\[\set{\bv}=\left\{
    \begin{array}{ll}
        \frac{1}{2}(\bv|_{T_1}+\bv|_{T_2}) & ~e\in\E_h^0,\\
        \bv|_e & ~e\subset\partial\Omega,
    \end{array}\right.\quad
[\bv]=\left\{
    \begin{array}{ll}
        \bv|_{T_1}\cdot\bn_1+\bv|_{T_2}\cdot\bn_2& ~e\in\E_h^0, \\ 
        \bv|_e\cdot\bn_e & ~e\subset\partial\Omega.
    \end{array}\right.\]
For a scalar-valued function $q\in W_h$, the average $\set{\cdot}$ and the jump $[\![\cdot]\!]$ are
\[\set{q}=\left\{
    \begin{array}{ll}
        \frac{1}{2}(q|_{T_1}+q|_{T_2}) & ~e\in\E_h^0,\\
        q|_e & ~e\subset\partial\Omega,
    \end{array}\right. \quad
[\![q]\!]=\left\{
    \begin{array}{ll}
        q|_{T_1}\bn_1+q|_{T_2}\bn_2& ~e\in\E_h^0, \\ 
        q|_e\bn_e & ~e\subset\partial\Omega.
    \end{array}\right.\]

In addition, for $\bv\in V_h^0+[H^1_0(\Omega)]^d$, the following equations hold true:
\begin{align} \label{v_averjump}
    \|\bv-\set{\bv}\|_e=\|[\bv]\|_e,\quad\text{if}~e\subset\partial\Omega, \quad\|\bv-\set{\bv}\|_e=\frac{1}{2}\|[\bv]\|_e,\quad\text{if}~e\in\mathcal{E}_h^0.
\end{align}
Similarly, for $q\in W_h$, we have
\begin{align} \label{q_averjump}
    \|q-\set{q}\|_e=0,\quad\text{if}~e\subset\partial\Omega, \quad\|q-\set{q}\|_e=\frac{1}{2}\|[\![q]\!]\|_e,\quad\text{if}~e\in\mathcal{E}_h^0.
\end{align}

Then we give the definition of the discrete weak gradient operators. For a vector-valued function $\bv\in V_h+[H^1(\Omega)]^d$, the discrete weak gradient $\nabla_w\bv$ on each cell $T$ is a unique polynomial function in $[P_j(T)]^{d\times d}(j>k)$ satisfying 
\begin{align}
    (\nabla_w\bv,\tau)_T=-(\bv,\nabla\cdot\tau)_T+\seq{\set{\bv},\tau\bn}_{\partial T}, \quad \forall\tau\in[P_j(T)]^{d\times d}. \label{CDG:grad_v}
\end{align}
Note that $j$ depends on $k$ and $n$ is the number of edges of polygon cell $T$. For a polygonal mesh, $j=n+k-1$ \cite{JMIN1}, and in particular, $j=k+1$ when the domain is partitioned into triangles \cite{JMIN2}. 

Similarly, for a scalar-valued function $q\in W_h$, the discrete weak gradient $\tilde{\nabla}_w q$ on each cell $T$ is a unique polynomial function in $[P_k(T)]^d$ satisfying 
\begin{align}
    (\tilde{\nabla}_w q,\bphi)_T=-(q,\nabla\cdot\bphi)_T+\seq{\set{q},\bphi\cdot\bn}_{\partial T}, \quad\forall\bphi\in[P_k(T)]^d. \label{CDG:grad_q}
\end{align}

For simplicity of notations, we introduce three bilinear forms as follows: 
\begin{align}
    a(\bv,\bw)&=(\nabla_w\bv,\nabla_w\bw)+(\kappa^{-1}\bv,\bw), \label{CDG:bilineara}\\
    b(\bv,q)&=(\bv,\tilde{\nabla}_w q), \label{CDG:bilinearb}\\
    s(p,q)&=\sumE h\seq{[\![p]\!],[\![q]\!]}_e. \label{CDG:bilinears}
 \end{align}
 
Now we have the following conforming discontinuous Galerkin finite element scheme for Brinkman equations (\ref{Eq:Brinkman1})-(\ref{Eq:Brinkman3}). 
\begin{algorithm}
    Find $\bu_h\in V_h^0$ and $p_h\in W_h$ such that
    \begin{align}
        a(\bu_h,\bv)+b(\bv,p_h)&=(\bf,\bv), &&\forall\bv\in V_h^0,\label{CDG:algo1}\\
        b(\bu_h,q)-s(p_h,q)&=0, &&\forall q\in W_h. \label{CDG:algo2}
    \end{align}
\end{algorithm}
\section{The Well-Posedness of CDG Scheme} \label{CDG:SCHEME}
In this section, we show that the CDG numerical scheme (\ref{CDG:algo1})-(\ref{CDG:algo2}) has a unique solution.

In order to analyse the well-posedness of the CDG scheme, we first define the following tri-bar norm. For a vector-valued function $\bv\in V_h+[H^1(\Omega)]^d$, 
\begin{align}
    \trb{\bv}^2=a(\bv,\bv)=\|\nabla_w\bv\|^2+\|\kappa^{-\frac{1}{2}}\bv\|^2. \label{CDG:trb}
\end{align}
It is obvious that $\trb{\cdot}$ indeed provides a norm in $V_h$.

For a scalar-valued function $q\in W_h$, we use the following norms in the rest of this paper,
\begin{align}
    \trb{q}^2_1&=\|\kappa^{\frac{1}{2}}\tilde{\nabla}_wq\|^2+\sumE h^{-1}\|[\![q]\!]\|_e^2,\label{CDG:q_1}\\
    \| q\|_h^2&=\sumE h\|[\![q]\!]\|_e^2. \label{CDG:q_h}
\end{align}

By the definition of $\trb{\cdot}$ and the Cauchy-Schwarz inequality, the following the boundedness for the bilinear form $a(\cdot,\cdot)$ holds.
\begin{lemma} \label{CDG:lemma_bound}
    For any $\bv,\bw\in V_h+[H^1(\Omega)]^d$, we have 
    \begin{align*} 
        |a(\bv,\bw)|\le\trb{\bv}\trb{\bw}.
    \end{align*}
\end{lemma}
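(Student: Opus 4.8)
The statement is the boundedness (continuity) of the bilinear form $a(\cdot,\cdot)$ with respect to the triple-bar norm, and since $\trb{\bv}^2 = a(\bv,\bv)$ by definition \eqref{CDG:trb}, this is really just the Cauchy--Schwarz inequality for the inner product induced by $a$. So the plan is to treat $a(\cdot,\cdot)$ as an inner product on $V_h+[H^1(\Omega)]^d$ and invoke Cauchy--Schwarz directly; alternatively, one can prove it by hand by splitting $a$ into its two constituent pieces.

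Concretely, I would first observe that $(\nabla_w\bv,\nabla_w\bw)$ is an $L^2$ inner product of matrix-valued functions, so by the ordinary Cauchy--Schwarz inequality it is bounded by $\|\nabla_w\bv\|\,\|\nabla_w\bw\|$. Next, since $\kappa^{-1}$ is symmetric and positive definite by \eqref{prop_kappa}, it admits a symmetric positive-definite square root $\kappa^{-1/2}$, and hence $(\kappa^{-1}\bv,\bw) = (\kappa^{-1/2}\bv,\kappa^{-1/2}\bw)$, which Cauchy--Schwarz bounds by $\|\kappa^{-1/2}\bv\|\,\|\kappa^{-1/2}\bw\|$. Adding these two estimates gives
\begin{align*}
|a(\bv,\bw)| \le \|\nabla_w\bv\|\,\|\nabla_w\bw\| + \|\kappa^{-1/2}\bv\|\,\|\kappa^{-1/2}\bw\|.
\end{align*}

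Finally I would apply the elementary discrete Cauchy--Schwarz inequality $x_1y_1 + x_2y_2 \le (x_1^2+x_2^2)^{1/2}(y_1^2+y_2^2)^{1/2}$ with $x_1 = \|\nabla_w\bv\|$, $x_2 = \|\kappa^{-1/2}\bv\|$, $y_1 = \|\nabla_w\bw\|$, $y_2 = \|\kappa^{-1/2}\bw\|$, which yields exactly $|a(\bv,\bw)| \le \trb{\bv}\,\trb{\bw}$ in view of \eqref{CDG:trb}. There is no real obstacle here: the only thing to be slightly careful about is that $\kappa^{-1}$ genuinely defines an inner product, which is guaranteed by the uniform ellipticity assumption \eqref{prop_kappa}; everything else is routine.
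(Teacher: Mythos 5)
Your proof is correct and takes essentially the same route as the paper, which simply invokes the definition of $\trb{\cdot}$ and the Cauchy--Schwarz inequality without spelling out the details; your termwise application of Cauchy--Schwarz followed by the discrete Cauchy--Schwarz inequality is exactly the intended argument.
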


Next, we present the inf-sup condition of $b(\cdot,\cdot)$.
\begin{lemma} \label{CDG:lemma_infsup}
    For any $q\in W_h$, there exist constants $C_1$ and $C_2$ independent of $h$ such that
    \begin{align} \label{CDG:infsup}
        \underset{\bv\in V_h}{sup}\frac{|b(\bv,q)|}{\trb{\bv}}\ge C_1h\trb{q}_1-C_2\|q\|_h.
    \end{align}
\end{lemma}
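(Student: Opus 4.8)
The plan is to construct, for a fixed $q\in W_h$, an explicit test function $\bv\in V_h$ for which $b(\bv,q)$ is bounded below by the required combination of $\trb{q}_1$ and $\|q\|_h$. First I would recall the definition $b(\bv,q)=(\bv,\tilde\nabla_w q)$ and the elementwise formula \eqref{CDG:grad_q}. The natural candidate is to take $\bv$ related to $\tilde\nabla_w q$ itself, so that $b(\bv,q)$ reproduces $\|\tilde\nabla_w q\|^2$ up to weights. More precisely, since $\tilde\nabla_w q|_T\in[P_k(T)]^d\subset[P_k(T)]^d$, the function $\bv:=\kappa^{-1}\tilde\nabla_w q$ (or simply $\tilde\nabla_w q$, using \eqref{prop_kappa} to compare norms) lies in $V_h$, and then $b(\bv,q)=(\kappa^{-1}\tilde\nabla_w q,\tilde\nabla_w q)\ge\lambda_1\|\tilde\nabla_w q\|^2\gtrsim\|\kappa^{\frac12}\tilde\nabla_w q\|^2$, which captures the first term of $\trb{q}_1^2$.

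The remaining work is to relate $\trb{\bv}$ to the right-hand side. By the definition \eqref{CDG:trb}, $\trb{\bv}^2=\|\nabla_w\bv\|^2+\|\kappa^{-\frac12}\bv\|^2$. The term $\|\kappa^{-\frac12}\bv\|^2$ is controlled by $\|\kappa^{\frac12}\tilde\nabla_w q\|^2$ using \eqref{prop_kappa} again. The term $\|\nabla_w\bv\|$ is the delicate one: applying \eqref{CDG:grad_v} to $\bv=\kappa^{-1}\tilde\nabla_w q$ and using an inverse inequality plus trace inequalities on each cell, one bounds $\|\nabla_w\bv\|_T\lesssim h_T^{-1}\|\bv\|_T$, so that $\|\nabla_w\bv\|\lesssim h^{-1}\|\kappa^{-\frac12}\tilde\nabla_w q\|\lesssim h^{-1}\|\kappa^{\frac12}\tilde\nabla_w q\|$. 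Hence $\trb{\bv}\lesssim h^{-1}\|\kappa^{\frac12}\tilde\nabla_w q\|$. Combining,
\[
\sup_{\bv\in V_h}\frac{|b(\bv,q)|}{\trb{\bv}}\ \gtrsim\ \frac{\|\kappa^{\frac12}\tilde\nabla_w q\|^2}{h^{-1}\|\kappa^{\frac12}\tilde\nabla_w q\|}\ =\ h\,\|\kappa^{\frac12}\tilde\nabla_w q\|.
\]
Finally, to recover the full $\trb{q}_1$ I would use the triangle inequality $h\trb{q}_1\le h\|\kappa^{\frac12}\tilde\nabla_w q\|+h\big(\sumE h^{-1}\|[\![q]\!]\|_e^2\big)^{1/2}$ and observe that the last term equals $\|q\|_h$ by definition \eqref{CDG:q_h}, so $h\|\kappa^{\frac12}\tilde\nabla_w q\|\ge h\trb{q}_1-\|q\|_h$. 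Absorbing constants gives \eqref{CDG:infsup}.

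The main obstacle I anticipate is the inverse estimate $\|\nabla_w\bv\|_T\lesssim h_T^{-1}\|\bv\|_T$ for the weak gradient: one must verify it carefully from the defining relation \eqref{CDG:grad_v}, controlling both the volume term $(\bv,\nabla\cdot\tau)_T$ and the boundary term $\seq{\set{\bv},\tau\bn}_{\partial T}$ by a standard inverse inequality and a trace inequality, uniformly in the shape-regularity constants and independent of the fact that $j=n+k-1$ may grow with the number of edges. A secondary technical point is checking that $\kappa^{-1}\tilde\nabla_w q$ genuinely lies in $[P_k(T)]^d$ on each cell — this uses the assumption that $\kappa$ (hence $\kappa^{-1}$) is piecewise constant — so that $\bv\in V_h$ is a legitimate test function; if one prefers, one can sidestep this by taking $\bv=\tilde\nabla_w q$ directly and paying only the constants $\lambda_1,\lambda_2$ from \eqref{prop_kappa}.
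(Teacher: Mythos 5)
Your proposal is correct and follows essentially the same route as the paper: the paper also tests with a piecewise-constant multiple of $\tilde{\nabla}_w q$ (it takes $\bv=\kappa\tilde{\nabla}_w q$, so that $b(\bv,q)=\|\kappa^{1/2}\tilde{\nabla}_w q\|^2$ exactly, whereas you take $\kappa^{-1}\tilde{\nabla}_w q$ and pay the constants $\lambda_1,\lambda_2$), and the inverse-type estimate $\|\nabla_w\bv\|\lesssim h^{-1}\|\bv\|$ for $\bv\in V_h$ that you flag as the crux is precisely the first step of the paper's proof, established via the trace and inverse inequalities. The remaining algebra (your triangle inequality on $\trb{q}_1$ versus the paper's manipulation of $\trb{q}_1^2-h^{-2}\|q\|_h^2$ in the quotient) is equivalent.
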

\begin{proof}
    For any $\bv\in V_h+[H^1(\Omega)]^d$, from the definition of $\nabla_w$, we get
    \begin{align*}
    \|\nabla_w\bv\|^2=\sumT((\nabla\bv,\nabla\bv)_T-\seq{\bv-\set{\bv},(\nabla_w\bv+\nabla\bv)\cdot\bn}_{\partial T}),
    \end{align*}
    Using the definition of $\set{\cdot}$, the following derivation holds true
    \begin{align*}
        &\sumT\|\bv-\set{\bv}\|_{\partial T}^2\\
        &\le\sum_{e\in\E_h}\left(\left\|\bv|_{T_1}-\frac{\bv|_{T_1}+\bv|_{T_2}}{2}\right\|_{\partial T_1\cap e}^2+\left\|\bv|_{T_2}-\frac{\bv|_{T_1}+\bv|_{T_2}}{2}\right\|_{\partial T_2\cap e}^2\right)\\
        &\le C\sum_{e\in\E_h}\left\|\frac{\bv|_{T_1}-\bv|_{T_2}}{2}\right\|_e^2\\
        &\le C\sum_{e\in\E_h}\left(\|\bv\|_{\partial T_1\cap e}^2+\|\bv\|_{\partial T_2\cap e}^2\right)\\
        &\le C\sumT\|\bv\|_{\partial T}^2. 
    \end{align*}
    Using the above inequality, the trace inequality (\ref{trace_ineq}) and the inverse inequality (\ref{inverse_ineq}), we obtain
    \begin{align*}
        \|\nabla_w\bv\|^2&\le\sumT(\|\nabla\bv\|_T^2+\|\bv-\set{\bv}\|_{\partial T}\|\nabla_w\bv+\nabla\bv\|_{\partial T})\\
        &\le\sumT(\|\nabla\bv\|_T^2+h^{-\frac{1}{2}}\|\bv-\set{\bv}\|_{\partial T}(\|\nabla_w\bv\|_T+\|\nabla\bv\|_T))\\
        &\le\sumT(\|\nabla\bv\|_T^2+Ch^{-\frac{1}{2}}\|\bv\|_{\partial T}(\|\nabla_w\bv\|_T+\|\nabla\bv\|_T))\\
        &\le\sumT(\|\nabla\bv\|_T^2+\frac{1}{2}\|\nabla_w\bv\|_T^2+\frac{1}{2}\|\nabla\bv\|_T^2+Ch^{-1}\|\bv\|_{\partial T}^2)\\
        &\le\sumT\left(\frac{1}{2}\|\nabla_w\bv\|_T^2+Ch^{-2}\|\bv\|_T^2\right).
    \end{align*}
    Taking $\bv=\kappa\tilde{\nabla}_w q\in V_h$ and using (\ref{prop_kappa}), we get the following estimate
    \begin{align*}
        \trb{\bv}^2&=\|\nabla_w\bv\|^2+\|\kappa^{-\frac{1}{2}}\bv\|^2\\
        &\le Ch^{-2}\|\bv\|^2+\|\kappa^{-\frac{1}{2}}\bv\|^2\\
        &\le Ch^{-2}\|\kappa\tilde{\nabla}_w q\|^2+\|\kappa^{\frac{1}{2}}\tilde{\nabla}_w q\|^2\\
        &\le Ch^{-2}\lambda_1^{-1}\|\kappa^{\frac{1}{2}}\tilde{\nabla}_w q\|^2+\|\kappa^{\frac{1}{2}}\tilde{\nabla}_w q\|^2\\
        &\le Ch^{-2}\trb{q}_1^2.
    \end{align*}
    Therefore, we have
    \begin{align*}
        \frac{b(\bv,q)}{\trb{\bv}}&=\frac{\trb{q}_1^2-h^{-2}\|q\|_h^2}{Ch^{-1}\trb{q}_1}\\
        &\ge\frac{\trb{q}_1^2-h^{-1}\trb{q}_1\|q\|_h}{Ch^{-1}\trb{q}_1}\\
        &\ge C_1h\trb{q}_1-C_2\|q\|_h.
    \end{align*}
    
\end{proof} 

\begin{lemma}
    The conforming discontinuous Galerkin finite element scheme (\ref{CDG:algo1})-(\ref{CDG:algo2}) has a unique solution.
\end{lemma}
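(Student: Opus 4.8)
The plan is to establish existence and uniqueness for this square linear system by proving uniqueness alone, since a square finite-dimensional linear system has a unique solution if and only if the associated homogeneous system admits only the trivial solution. Concretely, I would assume $\bf=\b0$ and show that the resulting solution $(\bu_h,p_h)\in V_h^0\times W_h$ must be $\bu_h=\b0$ and $p_h=0$.

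First I would take $\bv=\bu_h$ in \eqref{CDG:algo1} and $q=p_h$ in \eqref{CDG:algo2}, then subtract the second equation from the first. The coupling terms $b(\bu_h,p_h)$ cancel, leaving $a(\bu_h,\bu_h)+s(p_h,p_h)=0$, i.e. $\trb{\bu_h}^2+\|p_h\|_h^2=0$ by \eqref{CDG:trb} and \eqref{CDG:bilinears}. Since $\trb{\cdot}$ is a norm on $V_h^0$, this forces $\bu_h=\b0$, and it also gives $\|p_h\|_h=0$, i.e. $[\![p_h]\!]=\b0$ on every interior edge.

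Next I would recover $p_h=0$. With $\bu_h=\b0$, equation \eqref{CDG:algo1} reduces to $b(\bv,p_h)=0$ for all $\bv\in V_h^0$. The inf-sup estimate of Lemma \ref{CDG:lemma_infsup} then gives $0=\sup_{\bv}\frac{|b(\bv,p_h)|}{\trb{\bv}}\ge C_1 h\trb{p_h}_1-C_2\|p_h\|_h$, and since $\|p_h\|_h=0$ we conclude $\trb{p_h}_1=0$. By \eqref{CDG:q_1} this yields both $\tilde{\nabla}_w p_h=\b0$ on every cell and $[\![p_h]\!]=\b0$ on every interior edge, so $p_h$ glues into a global $H^1$ function with vanishing weak gradient; combined with $p_h\in L^2_0(\Omega)$ (zero mean) this gives $p_h=0$. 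Alternatively, once $[\![p_h]\!]=\b0$ the definition \eqref{CDG:grad_q} shows $\tilde{\nabla}_w p_h$ coincides with the piecewise classical gradient $\nabla p_h$, so $\nabla p_h=\b0$ on each cell and continuity across edges forces $p_h$ constant, hence zero by the mean-value constraint.

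The only mild subtlety — the main obstacle, such as it is — is handling the inf-sup bound carefully: Lemma \ref{CDG:lemma_infsup} is a \emph{weak} (mesh-dependent, with a negative $\|q\|_h$ term) inf-sup condition rather than a classical one, so one must first extract $\|p_h\|_h=0$ from the energy identity before the inf-sup inequality becomes useful for concluding $\trb{p_h}_1=0$. Once the order of these two steps is respected, the argument is routine. I would also note explicitly that all spaces involved are finite-dimensional so that "trivial kernel $\Rightarrow$ invertibility" applies, completing the proof.
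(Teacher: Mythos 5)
Your argument is correct, and its first half is identical to the paper's: pass to the homogeneous system, take $\bv=\bu_h$ and $q=p_h$, subtract, and read off $\trb{\bu_h}^2+\|p_h\|_h^2=0$, hence $\bu_h=\b0$ and $[\![p_h]\!]=\b0$ on all interior edges. Where you diverge is in recovering $p_h=0$. The paper argues by hand: it substitutes the particular test function $\bv=\nabla p_h$ into $b(\bv,p_h)=0$, unwinds the definition \eqref{CDG:grad_q} using the vanishing jumps, and arrives at $\sumT(\nabla p_h,\nabla p_h)_T=0$ directly. You instead invoke the already-proved inf-sup estimate of Lemma \ref{CDG:lemma_infsup} to get $C_1h\trb{p_h}_1\le C_2\|p_h\|_h=0$, then unwind the norm \eqref{CDG:q_1} to conclude $\tilde{\nabla}_w p_h=\b0$ and zero jumps, and finish by identifying $\tilde{\nabla}_w p_h$ with the piecewise gradient once the jumps vanish. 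Your route is tidier in that it reuses established machinery rather than repeating a computation (the proof of Lemma \ref{CDG:lemma_infsup} in fact chooses the analogous test function $\bv=\kappa\tilde{\nabla}_w q$), and you correctly identify that the energy step must precede the inf-sup step because the inf-sup bound is only the weak, mesh-dependent form with the $-C_2\|q\|_h$ correction. You also make explicit the finite-dimensional ``trivial kernel implies invertibility'' step, which the paper leaves tacit.

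One shared imprecision is worth flagging, and your use of Lemma \ref{CDG:lemma_infsup} makes it slightly more visible: from \eqref{CDG:algo1} you only know $b(\bv,p_h)=0$ for $\bv\in V_h^0$, whereas the supremum in \eqref{CDG:infsup} is taken over all of $V_h$, and the maximizing candidate $\kappa\tilde{\nabla}_w p_h$ need not vanish on $\partial\Omega$. So strictly speaking you cannot assert that the supremum in the lemma equals zero without either restating the lemma over $V_h^0$ or modifying the test function near the boundary. The paper's own proof suffers from exactly the same issue (its choice $\bv=\nabla p_h$ need not lie in $V_h^0$ either), so this is not a defect of your approach relative to the paper's, but if you want a fully airtight write-up you should address the $V_h$ versus $V_h^0$ mismatch explicitly.
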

\begin{proof}
    Consider the corresponding homogeneous equation $\bf=\b0$, let $\bv=\bu_h$ in (\ref{CDG:algo1}) and $q=p_h$ in (\ref{CDG:algo2}). Then subtracting (\ref{CDG:algo2}) form (\ref{CDG:algo1}), we have
    \begin{align*}
        \trb{\bu_h}^2+\|p_h\|_h^2=a(\bu_h,\bu_h)+s(p_h,p_h)=0,
    \end{align*}
    which implies $\bu_h=\b0$ and $\|p_h\|_h=0$.
    
    Let $\bu_h=\b0$ and $\bf=\b0$ in (\ref{CDG:algo1}), we have $b(\bv,p_h)=0$ for any $\bv\in V_h^0$. According to the definition of $b(\cdot,\cdot)$ and $\|p_h\|_h=0$, let $\bv=\nabla p_h$ on each cell $T$, it holds that 
    \begin{align*}
        0=b(\bv,p_h)&=\sumT(\bv,\tilde{\nabla}_wp_h)_T\\
        &=\sumT(-(p_h,\nabla\cdot\bv)_T+\seq{\{p_h\},\bv\cdot\bn}_{\partial T})\\
        &=\sumT(-(p_h,\nabla\cdot\bv)_T+\seq{p_h,\bv\cdot\bn}_{\partial T}-\seq{p_h-\{p_h\},\bv\cdot\bn}_{\partial T})\\
        &=\sumT(\bv,\nabla p_h)_T-\sumE\seq{[\![p_h]\!],\{\bv\}}_{\partial T}\\
        &=\sumT(\bv,\nabla p_h)_T=\sumT(\nabla p_h,\nabla p_h)_T.
    \end{align*}
    Then, we have $\nabla p_h=0$ on each cell $T$. Since $p_h$ is continuous and $p_h\in L^2_0(\Omega)$, we have $p_h=0$ and complete the proof of the lemma.
    
\end{proof}

\section{Error Equations} \label{CDG:ERROR_EQUATION}
In this section, we establish the error equations between the numerical solution and the exact solution.

Let $Q_h$, $\bQ_h$ and $\dQ_h$ be the standard $L^2$ projection operators onto $[P_k(T)]^d$, $[P_j(T)]^{d\times d}$ and $P_{k-1}(T)$, respectively. 

First, we recall some properties about the projection operators.
\begin{lemma}
    For the projection operators $Q_h$, $\bQ_h$ and $\dQ_h$, the following properties hold
    \begin{align}
         \nabla_w\bv &=\bQ_h(\nabla\bv), \quad\forall\bv\in [H^1(\Omega)]^d, \label{CDG:Eq_proj1}\\
         (\tilde{\nabla}_w(\dQ_hq),\bphi)_T &=(Q_h(\nabla q),\bphi)_T-\seq{q-\dQ_hq,\bphi\cdot\bn}_{\partial T},\label{CDG:Eq_proj2}\\
        &\qquad\qquad\forall q\in H^1(\Omega), ~\forall\bphi\in[P_k(T)]^d. \nonumber
    \end{align}
\end{lemma}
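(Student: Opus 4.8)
The plan is to establish the two identities separately, in each case testing the defining equations of the weak gradient operators against a well-chosen polynomial and integrating by parts.

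\emph{Identity \eqref{CDG:Eq_proj1}.} For $\bv\in[H^1(\Omega)]^d$ and any $\tau\in[P_j(T)]^{d\times d}$, I would start from the definition \eqref{CDG:grad_v} of $\nabla_w\bv$ and apply integration by parts on the term $(\bv,\nabla\cdot\tau)_T$, which gives $(\nabla_w\bv,\tau)_T=(\nabla\bv,\tau)_T-\seq{\bv-\set{\bv},\tau\bn}_{\partial T}$. Since $\bv\in[H^1(\Omega)]^d$ is single-valued on each interior edge, $\set{\bv}=\bv$ on $\partial T$ (and the boundary-edge case is trivial from the definition of the average), so the boundary term vanishes and $(\nabla_w\bv,\tau)_T=(\nabla\bv,\tau)_T$ for all $\tau\in[P_j(T)]^{d\times d}$. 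By the definition of the $L^2$ projection $\bQ_h$ onto $[P_j(T)]^{d\times d}$, the right-hand side equals $(\bQ_h(\nabla\bv),\tau)_T$; since $\nabla_w\bv\in[P_j(T)]^{d\times d}$ as well, taking $\tau=\nabla_w\bv-\bQ_h(\nabla\bv)$ yields \eqref{CDG:Eq_proj1}.

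\emph{Identity \eqref{CDG:Eq_proj2}.} For $q\in H^1(\Omega)$ and $\bphi\in[P_k(T)]^d$, I would use the definition \eqref{CDG:grad_q} applied to $\dQ_h q\in P_{k-1}(T)\subset W_h$, giving $(\tilde{\nabla}_w(\dQ_hq),\bphi)_T=-(\dQ_hq,\nabla\cdot\bphi)_T+\seq{\set{\dQ_h q},\bphi\cdot\bn}_{\partial T}$. The next step is to rewrite the volume term: since $\nabla\cdot\bphi\in P_{k-1}(T)$, the projection property gives $(\dQ_hq,\nabla\cdot\bphi)_T=(q,\nabla\cdot\bphi)_T$, and then integration by parts yields $(q,\nabla\cdot\bphi)_T=-(\nabla q,\bphi)_T+\seq{q,\bphi\cdot\bn}_{\partial T}$. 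Combining, $(\tilde{\nabla}_w(\dQ_hq),\bphi)_T=(\nabla q,\bphi)_T-\seq{q-\set{\dQ_h q},\bphi\cdot\bn}_{\partial T}$; replacing $(\nabla q,\bphi)_T$ by $(Q_h(\nabla q),\bphi)_T$ (valid since $\bphi\in[P_k(T)]^d$) gives the stated right-hand side provided the edge term $\seq{q-\set{\dQ_h q},\bphi\cdot\bn}_{\partial T}$ matches $\seq{q-\dQ_hq,\bphi\cdot\bn}_{\partial T}$.

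\emph{Main obstacle.} The one point needing care is that last edge term in \eqref{CDG:Eq_proj2}: the definition of $\tilde{\nabla}_w$ uses the \emph{average} $\set{\dQ_h q}$, whereas the claimed formula has the one-sided trace $\dQ_h q$. The resolution is that the identity is a local (per-cell) statement: on $\partial T$, the relevant quantity for the operator defined cell-by-cell is the single-sided value $\dQ_h q|_T$; more precisely, after summing over $\partial T$ the contributions organize so that $q-\set{\dQ_h q}$ and $q-\dQ_h q$ differ only by the jump of $\dQ_h q$ across interior edges, which is orthogonal to the continuous test structure here, so the two forms agree. I would verify this reduction carefully, distinguishing interior and boundary edges and using the identity \eqref{q_averjump}; everything else is a routine integration-by-parts bookkeeping exercise.
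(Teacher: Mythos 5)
Your proof of \eqref{CDG:Eq_proj1} is correct and is exactly the paper's argument: use the single-valuedness of the trace of $\bv\in[H^1(\Omega)]^d$ to replace $\set{\bv}$ by $\bv$ in \eqref{CDG:grad_v}, integrate by parts, and invoke the definition of $\bQ_h$. For \eqref{CDG:Eq_proj2}, your computation up to
\[
(\tilde{\nabla}_w(\dQ_hq),\bphi)_T=(Q_h(\nabla q),\bphi)_T-\seq{q-\set{\dQ_h q},\bphi\cdot\bn}_{\partial T}
\]
also coincides with the paper's, and you have isolated the only delicate step: the stated identity carries the one-sided trace $\dQ_hq|_T$ in the edge term, whereas the definition of $\tilde{\nabla}_w$ produces the two-sided average $\set{\dQ_hq}$. (The paper's own proof passes from $\seq{\set{\dQ_hq},\bphi\cdot\bn}_{\partial T}$ to $\seq{\dQ_hq,\bphi\cdot\bn}_{\partial T}$ between its first and second displayed lines with no justification at all, so you are scrutinizing precisely the right spot.)

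However, your proposed resolution of this point does not work. The discrepancy between the two forms is $\seq{\set{\dQ_hq}-\dQ_hq,\bphi\cdot\bn}_{\partial T}$, and on an interior edge $e\subset\partial T_1\cap\partial T_2$ one has $\set{\dQ_hq}-\dQ_hq|_{T_1}=\tfrac{1}{2}(\dQ_hq|_{T_2}-\dQ_hq|_{T_1})$, i.e.\ half the jump of $\dQ_hq$, which is nonzero in general since $\dQ_h$ is computed cell by cell and does not yield continuous functions; so the per-cell identity genuinely fails without the average. Summation does not rescue it either: pairing the two contributions on each interior edge gives
\[
\sumT\seq{\set{\dQ_hq}-\dQ_hq,\bphi\cdot\bn}_{\partial T}=-\sumE\seq{[\![\dQ_hq]\!],\set{\bphi}}_e=-\sumE\seq{[\![\dQ_hq-q]\!],\set{\bphi}}_e,
\]
and because the pairing is against the \emph{average} $\set{\bphi}$ rather than the jump $[\bphi]$, there is no orthogonality to exploit for a discontinuous test function $\bphi\in V_h$: your appeal to ``orthogonality to the continuous test structure'' would require $\bphi$ to be single-valued (or at least normal-continuous) across interior edges, which elements of $V_h$ are not. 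The honest version of \eqref{CDG:Eq_proj2} must therefore either retain $\set{\dQ_hq}$ in the boundary term or carry the extra term $\sumE\seq{[\![q-\dQ_hq]\!],\set{\bphi}}_e$ explicitly and estimate it alongside $l_3$; as written, both your argument and the paper's leave this term unaccounted for.
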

\begin{proof}
    For any $\tau\in[P_j(T)]^{d\times d}$, we have
   \begin{align*}
        (\nabla_w\bv,\tau)_T&=-(\bv,\nabla\cdot\tau)_T+\seq{\set{\bv},\tau\cdot\bn}_{\partial T}\\
        &=-(\bv,\nabla\cdot\tau)_T+\seq{\bv,\tau\cdot\bn}_{\partial T}\\
        &=(\nabla\bv,\tau)_T\\
        &=(\bQ_h(\nabla\bv),\tau)_T.
    \end{align*}
    Similarly, for any $\bphi\in[P_k(T)]^d$, we have
    \begin{align*}
        (\tilde{\nabla}_w(\dQ_hq),\bphi)_T&=-(\dQ_hq,\nabla\cdot\bphi)_T+\seq{\set{\dQ_hq},\bphi\cdot\bn}_{\partial T}\\
        &=-(q,\nabla\cdot\bphi)_T+\seq{\dQ_hq,\bphi\cdot\bn}_{\partial T}\\
        &=-(q,\nabla\cdot\bphi)_T+\seq{q,\bphi\cdot\bn}_{\partial T}-\seq{q-\dQ_hq,\bphi\cdot\bn}_{\partial T}\\
        &=(\nabla q,\bphi)_T-\seq{q-\dQ_hq,\bphi\cdot\bn}_{\partial T}\\
        &=(Q_h(\nabla q),\bphi)_T-\seq{q-\dQ_hq,\bphi\cdot\bn}_{\partial T},
    \end{align*}
    which completes the proof of the lemma.
\end{proof}

Let $\be_h=Q_h\bu-\bu_h$ and $\eps_h=\dQ_hp-p_h$ be the error functions, where $(\bu;p)$ be the solution of (\ref{Eq:Brinkman1})-(\ref{Eq:Brinkman3}) and $(\bu_h;p_h)$ be the solution of (\ref{CDG:algo1})-(\ref{CDG:algo2}). We shall derive the error equations that $\be_h$ and $\eps_h$ satisfy.

\begin{lemma} \label{CDG:lemma_error}
    For any $\bv\in V_h^0$ and $q\in W_h$, the following equations hold true
    \begin{align}
        a(\be_h,\bv)+b(\bv,\eps_h)&=-l_1(\bu,\bv)+l_2(\bu,\bv)-l_3(p,\bv),\label{CDG:Eq_err1}\\
        b(\be_h,q)-s(\eps_h,q)&=l_4(\bu,q)-s(\dQ_hp,q),  \label{CDG:Eq_err2}
    \end{align}
    where 
    \begin{align*}
        l_1(\bu,\bv)&=\sumT(\nabla_w(\bu-Q_h\bu),\nabla_w\bv)_T,\\
        l_2(\bu,\bv)&=\sumT\seq{(\nabla\bu-\bQ_h\nabla\bu)\cdot\bn,\bv-\set{\bv}}_{\partial T},\\
        l_3(p,\bv)&=\sumT\seq{p-\dQ_hp,\bv\cdot\bn}_{\partial T},\\
        l_4(\bu,q)&=\sumT\seq{(\bu-Q_h\bu)\cdot\bn,q-\set{q}}_{\partial T},\\
        s(\dQ_hp,q)&=\sumE h\seq{[\![\dQ_hp]\!],[\![q]\!]}.
    \end{align*}
\end{lemma}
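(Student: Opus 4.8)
The plan is to derive the two error equations by subtracting suitable testing identities from the CDG scheme \eqref{CDG:algo1}--\eqref{CDG:algo2}, using the projection properties \eqref{CDG:Eq_proj1}--\eqref{CDG:Eq_proj2} together with elementwise integration by parts. First I would establish a testing identity for the exact solution $\bu$. Testing \eqref{Eq:Brinkman1} against $\bv\in V_h^0$, integrating by parts cell-by-cell, and summing over $T\in\T_h$ gives
\begin{align*}
\sumT(\nabla\bu,\nabla\bv)_T-\sumT\seq{\nabla\bu\cdot\bn,\bv}_{\partial T}+(\kappa^{-1}\bu,\bv)-\sumT(p,\nabla\cdot\bv)_T+\sumT\seq{p,\bv\cdot\bn}_{\partial T}=(\bf,\bv).
\end{align*}
Since $\bu\in[H^1_0(\Omega)]^d$ and $p\in L^2_0(\Omega)$ are single-valued across interior edges (and $\bv$ vanishes on $\partial\Omega$), the edge terms can be rewritten using $[\bv]$ and $\set{\bv}$; the key observation is that $\sumT\seq{\nabla\bu\cdot\bn,\bv}_{\partial T}=\sumT\seq{\bQ_h\nabla\bu\cdot\bn,\set{\bv}}_{\partial T}+\sumT\seq{(\nabla\bu-\bQ_h\nabla\bu)\cdot\bn,\bv-\set{\bv}}_{\partial T}$, which is exactly where $l_2$ appears. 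Similarly $\sumT\seq{p,\bv\cdot\bn}_{\partial T}=\sumT\seq{\dQ_hp,\bv\cdot\bn}_{\partial T}+l_3(p,\bv)$, and the first piece combines with $\sumT(\dQ_hp,\nabla\cdot\bv)_T$ via \eqref{CDG:grad_q} to produce $b(\bv,\dQ_hp)$.

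Next I would convert the volume and first edge terms into weak-gradient form. Using \eqref{CDG:Eq_proj1}, $\sumT(\nabla\bu,\nabla\bv)_T=(\bQ_h\nabla\bu,\nabla_w\bv)=(\nabla_w(Q_h\bu),\nabla_w\bv)+\sumT(\nabla_w(\bu-Q_h\bu),\nabla_w\bv)_T$ up to the $\bQ_h\nabla\bu$ versus $\nabla_w(Q_h\bu)$ discrepancy — here I would invoke the defining relation \eqref{CDG:grad_v} for $\nabla_w(Q_h\bu)$ and $\nabla_w\bv$ to absorb the remaining edge term $\seq{\bQ_h\nabla\bu\cdot\bn,\set{\bv}}$ correctly, producing $a(Q_h\bu,\bv)$ plus the $l_1$ and $l_2$ remainders. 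Subtracting the result from \eqref{CDG:algo1} with the same test function $\bv$, and using $(\kappa^{-1}(Q_h\bu-\bu),\bv)$ — which vanishes because $\bv\in[P_k(T)]^d$ and $\kappa$ is piecewise constant so $\kappa^{-1}\bv\in[P_k(T)]^d$ — yields \eqref{CDG:Eq_err1} with $\be_h=Q_h\bu-\bu_h$, $\eps_h=\dQ_hp-p_h$.

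For the second equation I would start from the incompressibility condition. Testing \eqref{Eq:Brinkman2} appropriately, or rather computing $b(Q_h\bu,q)$ directly via \eqref{CDG:grad_q}: $b(Q_h\bu,q)=(Q_h\bu,\tilde{\nabla}_wq)=\sumT(-(q,\nabla\cdot Q_h\bu)_T+\seq{\set{q},Q_h\bu\cdot\bn}_{\partial T})$; then integration by parts and $\nabla\cdot\bu=0$ together with $q$ being a polynomial of degree $k-1$ on each cell give $b(Q_h\bu,q)=\sumT\seq{\set{q},(Q_h\bu-\bu)\cdot\bn}_{\partial T}=l_4(\bu,q)$ after noting $\sumT\seq{q,\bu\cdot\bn}_{\partial T}$ telescopes to zero since $\bu$ is single-valued and $[\![q]\!]$ pairs against $\set{\bu}$... actually the cleanest route is $\sumT\seq{\set{q}-q,(Q_h\bu-\bu)\cdot\bn}$ vanishes? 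I would sort out the precise bookkeeping here. Subtracting \eqref{CDG:algo2} then gives $b(\be_h,q)=l_4(\bu,q)+s(p_h,q)=l_4(\bu,q)-s(\dQ_hp,q)+s(\eps_h,q)$, which is \eqref{CDG:Eq_err2}. The main obstacle is the careful edge-term accounting in the first equation: keeping track of which portions of $\sumT\seq{\nabla\bu\cdot\bn,\bv}_{\partial T}$ and the weak-gradient edge contributions cancel against $a(Q_h\bu,\bv)$ versus which survive as the genuinely consistent error terms $l_1,l_2,l_3$, and verifying that the $\kappa^{-1}$ mass term contributes no error because of the piecewise-constant assumption on $\kappa$.
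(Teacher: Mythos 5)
Your overall strategy is the same as the paper's: test the momentum equation with $\bv\in V_h^0$, integrate by parts cell by cell, insert the projections $Q_h$, $\bQ_h$, $\dQ_h$ via \eqref{CDG:Eq_proj1}--\eqref{CDG:Eq_proj2} and the defining relations \eqref{CDG:grad_v}--\eqref{CDG:grad_q}, and subtract the scheme \eqref{CDG:algo1}--\eqref{CDG:algo2}. Your derivation of the first error equation \eqref{CDG:Eq_err1}, including the observation that $(\kappa^{-1}(\bu-Q_h\bu),\bv)=0$ because $\kappa$ is piecewise constant so that $\kappa^{-1}\bv\in[P_k(T)]^d$, matches the paper's proof in substance.

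The second error equation is where your bookkeeping breaks down, and you flag the uncertainty yourself. The identity you invoke, that $\sumT\seq{q,\bu\cdot\bn}_{\partial T}$ ``telescopes to zero,'' is false: since $\bu$ is single-valued this sum equals $\sum_{e\in\E_h^0}\seq{[\![q]\!],\bu}_e$, which does not vanish for discontinuous $q$. The term that does vanish is $\sumT\seq{\set{q},\bu\cdot\bn}_{\partial T}=\sum_{e\in\E_h}\seq{\set{q},[\bu]}_e=0$, because $\set{q}$ is single-valued on each edge and $[\bu]=0$ for $\bu\in[H^1_0(\Omega)]^d$. As a consequence, your intermediate expression $b(Q_h\bu,q)=\sumT\seq{\set{q},(Q_h\bu-\bu)\cdot\bn}_{\partial T}$ is not correct and is not equal to $l_4(\bu,q)$, which carries the factor $q-\set{q}$ rather than $\set{q}$. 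The correct accounting, as in the paper, is to write $\seq{q,(\bu-Q_h\bu)\cdot\bn}_{\partial T}=\seq{q-\set{q},(\bu-Q_h\bu)\cdot\bn}_{\partial T}+\seq{\set{q},(\bu-Q_h\bu)\cdot\bn}_{\partial T}$ and combine the second piece with the boundary term $\seq{\set{q},Q_h\bu\cdot\bn}_{\partial T}$ coming from \eqref{CDG:grad_q}, so that only $\sumT\seq{\set{q},\bu\cdot\bn}_{\partial T}=0$ must vanish; this yields $b(Q_h\bu,q)=l_4(\bu,q)$. There is also a sign slip in the final step: subtracting \eqref{CDG:algo2} gives $b(\be_h,q)=l_4(\bu,q)-s(p_h,q)$, not $+s(p_h,q)$, and your subsequent rewriting of $s(p_h,q)$ contains a compensating sign error, which is why you still land on the correct \eqref{CDG:Eq_err2}. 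All of this is repairable, but as written the second half of the argument does not go through.
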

\begin{proof}
    Testing (\ref{Eq:Brinkman1}) by $\bv\in V_h^0$ yields 
    \begin{align*}
        -(\Delta\bu,\bv)+(\kappa^{-1}\bu,\bv)+(\nabla p,\bv)=(\bf,\bv).
    \end{align*}
    Applying the definition of projection operators $Q_h$, $\bQ_h$ and $\dQ_h$, the definition of the weak gradient $\nabla_w$ and $\tilde{\nabla}_w$ and (\ref{CDG:Eq_proj1}), we get
    \begin{align*}
        -(\Delta\bu,\bv)&=\sumT((\nabla\bu,\nabla\bv)_T-\seq{\nabla\bu\cdot\bn,\bv}_{\partial T})\\
        &=\sumT((\bQ_h\nabla\bu,\nabla\bv)_T-\seq{\nabla\bu\cdot\bn,\bv}_{\partial T})\\
        &=\sumT(-(\bv,\nabla\cdot(\bQ_h\nabla\bu))_T+\seq{\bv,(\bQ_h\nabla\bu-\nabla\bu)\cdot\bn}_{\partial T})\\
        &=\sumT((\nabla_w\bv,\bQ_h\nabla\bu)_T+\seq{\bv-\set{\bv},(\bQ_h\nabla\bu-\nabla\bu)\cdot\bn}_{\partial T})\\
        &=\sumT((\nabla_w\bu,\nabla_w\bv)_T+\seq{\bv-\set{\bv},(\bQ_h\nabla\bu-\nabla\bu)\cdot\bn}_{\partial T})\\
        &=\sumT(\nabla_w(Q_h\bu),\nabla_w\bv)_T+l_1(\bu,\bv)-l_2(\bu,\bv). \\
    \end{align*}
    According to the definition of $Q_h$ and (\ref{CDG:Eq_proj2}), we have
    \begin{align*}
        (\nabla p,\bv)&=\sumT(\nabla p,\bv)_T\\
        &=\sumT(Q_h(\nabla p),\bv)_T\\
        &=\sumT(\tilde{\nabla}_w(\dQ_hp),\bv)_T+\sumT\seq{p-\dQ_hp,\bv\cdot\bn}_{\partial T}\\
        &=\sumT(\tilde{\nabla}_w(\dQ_hp),\bv)_T+l_3(p,\bv),
    \end{align*}
    and
    \begin{align*}
        (\kappa^{-1}\bu,\bv)&=(\bu,\kappa^{-1}\bv)=(Q_h\bu,\kappa^{-1}\bv)=(\kappa^{-1}Q_h\bu,\bv).
    \end{align*}
    Using the definition of $a(\cdot,\cdot)$ and $b(\cdot,\cdot)$ and the above equations, we have
    \begin{align}
        a(Q_h\bu,\bv)+b(\bv,\dQ_hp)+l_1(\bu,\bv)-l_2(\bu,\bv)+l_3(p,\bv)=(\bf,\bv). \label{CDG:Eq_test1}
    \end{align}
    Subtracting (\ref{CDG:algo1}) from (\ref{CDG:Eq_test1}), we arrive at
    \begin{align*}
        a(\be_h,\bv)+b(\bv,\eps_h)+l_1(\bu,\bv)-l_2(\bu,\bv)+l_3(p,\bv)=0, 
    \end{align*}
    which completes the proof of (\ref{CDG:Eq_err1}).
 
    Similarly, testing (\ref{Eq:Brinkman2}) by $q\in W_h$ yields
    \begin{align*}
        0=(\nabla\cdot\bu,q)&=\sumT(-(\nabla q,\bu)_T+\seq{ q,\bu\cdot\bn}_{\partial T})\\
        &=\sumT(-(\nabla q,Q_h\bu)_T+\seq{ q,\bu\cdot\bn}_{\partial T})\\
        &=\sumT((q,\nabla\cdot(Q_h\bu))_T+\seq{ q,(\bu-Q_h\bu)\cdot\bn}_{\partial T})\\
        &=\sumT(-(Q_h\bu,\tilde{\nabla}_wq)_T,\seq{ q-\set{q},(\bu-Q_h\bu)\cdot\bn}_{\partial T}). 
    \end{align*}
    Using (\ref{CDG:algo2}), we have
    \begin{align*}
        \sumT(Q_h\bu-\bu_h,\tilde{\nabla}_wq)_T-\sumE h\seq{[\![p_h]\!],[\![q]\!]}_e=l_4(\bu,q).
    \end{align*}
    Adding $-s(\dQ_hp,q)$ to both sides of this equation, it follows that
    \begin{align*}
        \sumT(Q_h\bu-\bu_h,\tilde{\nabla}_wq)_T-\sumE h\seq{[\![\dQ_hp-p_h]\!],[\![q]\!]}_e=l_4(\bu,q)-s(\dQ_hp,q),
    \end{align*}
    which implies (\ref{CDG:Eq_err2}) and we complete the proof of this lemma.
\end{proof}
\section{Error Estimates} \label{CDG:ERROR_ESTIMATE}
In this section, we aim to deriving the error estimates to the numerical scheme (\ref{CDG:algo1})-(\ref{CDG:algo2}). 

\begin{theorem} \label{CDG:Thm_H1}
    Let $(\bu;p)\in[H^1_0(\Omega)\cap H^{k+1}(\Omega)]^d\times(L^2_0(\Omega)\cap H^k(\Omega))$ be the exact solution of (\ref{Eq:Brinkman1})-(\ref{Eq:Brinkman3}) and $(\bu_h;p_h)\in V_h^0\times W_h$ be the solution of (\ref{CDG:algo1})-(\ref{CDG:algo2}), respectively. Then there exists a constant $C$ such that
    \begin{align}
        \trb{\be_h}+\|\eps_h\|_h&\le Ch^k(\|\bu\|_{k+1}+\|p\|_k), \label{CDG:Ineq_H11}\\
        \trb{\eps_h}_1&\le Ch^{k-1}(\|\bu\|_{k+1}+\|p\|_k). \label{CDG:Ineq_H12}
    \end{align}
\end{theorem}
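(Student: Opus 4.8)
The plan is to exploit the error equations of Lemma~\ref{CDG:lemma_error}, the boundedness of $a(\cdot,\cdot)$ (Lemma~\ref{CDG:lemma_bound}), and the inf-sup condition (Lemma~\ref{CDG:lemma_infsup}), combined with approximation estimates for the $L^2$ projections and trace/inverse inequalities, to control the four consistency functionals $l_1,\dots,l_4$ and the extra stabilization term $s(\dQ_hp,q)$. First I would take $\bv=\be_h$ in \eqref{CDG:Eq_err1} and $q=\eps_h$ in \eqref{CDG:Eq_err2}, then subtract, so that the $b(\be_h,\eps_h)$ terms cancel and the left-hand side becomes $\trb{\be_h}^2+\|\eps_h\|_h^2$. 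The right-hand side is then a sum $-l_1(\bu,\be_h)+l_2(\bu,\be_h)-l_3(p,\be_h)-l_4(\bu,\eps_h)+s(\dQ_hp,\eps_h)$, and each piece must be bounded by $Ch^k(\|\bu\|_{k+1}+\|p\|_k)$ times either $\trb{\be_h}$ or $\|\eps_h\|_h$.

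Next I would estimate the functionals one at a time. For $l_1$, use \eqref{CDG:Eq_proj1} to write $\nabla_w(\bu-Q_h\bu)=\bQ_h\nabla\bu-\nabla_w(Q_h\bu)$ and reduce to a standard approximation bound $\|\nabla_w(\bu-Q_h\bu)\|\le Ch^k\|\bu\|_{k+1}$, giving $|l_1|\le Ch^k\|\bu\|_{k+1}\trb{\be_h}$. For $l_2$ and $l_3$, apply Cauchy-Schwarz on $\partial T$, the trace inequality \eqref{trace_ineq} together with approximation properties of $\bQ_h$ and $\dQ_h$ to bound $\|(\nabla\bu-\bQ_h\nabla\bu)\cdot\bn\|_{\partial T}$ and $\|p-\dQ_hp\|_{\partial T}$ by $Ch^{k-1/2}$ (times the appropriate Sobolev norm), and bound $\|\be_h-\set{\be_h}\|_{\partial T}$ and $\|\be_h\cdot\bn\|_{\partial T}$ by $Ch^{-1/2}$ times $\|\be_h\|_T$ or $\|\nabla_w\be_h\|_T$ using \eqref{v_averjump} and the inverse/trace inequalities (the same manipulation already carried out in the proof of Lemma~\ref{CDG:lemma_infsup}), so that altogether $|l_2|+|l_3|\le Ch^k(\|\bu\|_{k+1}+\|p\|_k)\trb{\be_h}$. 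For $l_4$, similarly $\|(\bu-Q_h\bu)\cdot\bn\|_{\partial T}\le Ch^{k+1/2}\|\bu\|_{k+1}$ and $\|\eps_h-\set{\eps_h}\|_{\partial T}=\tfrac12\|[\![\eps_h]\!]\|_e$ by \eqref{q_averjump}, which pairs with the $\|\eps_h\|_h$ factor after inserting the scaling $h^{1/2}\cdot h^{-1/2}$; for $s(\dQ_hp,\eps_h)=\sumE h\seq{[\![\dQ_hp]\!],[\![\eps_h]\!]}_e$, note $[\![\dQ_hp]\!]=[\![\dQ_hp-p]\!]$ since $p$ is continuous, so $\|h^{1/2}[\![\dQ_hp]\!]\|_e\le Ch^{k}\|p\|_k$ by trace/approximation, and again this pairs with $\|\eps_h\|_h$. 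Collecting everything and using Young's inequality to absorb the $\trb{\be_h}$ and $\|\eps_h\|_h$ factors into the left-hand side yields \eqref{CDG:Ineq_H11}.

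Finally, to get \eqref{CDG:Ineq_H12} I would return to the inf-sup inequality \eqref{CDG:infsup} applied to $q=\eps_h$: there is $\bv\in V_h$ with $\trb{\bv}=1$ and $b(\bv,\eps_h)\ge C_1 h\trb{\eps_h}_1-C_2\|\eps_h\|_h$. From \eqref{CDG:Eq_err1}, $b(\bv,\eps_h)=-a(\be_h,\bv)-l_1+l_2-l_3$, whose absolute value is $\le C(\trb{\be_h}+h^k(\|\bu\|_{k+1}+\|p\|_k))\trb{\bv}=C(\trb{\be_h}+h^k(\|\bu\|_{k+1}+\|p\|_k))$ by Lemma~\ref{CDG:lemma_bound} and the $l_i$ bounds just obtained. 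Hence $h\trb{\eps_h}_1\le C(\trb{\be_h}+\|\eps_h\|_h+h^k(\|\bu\|_{k+1}+\|p\|_k))$, and dividing by $h$ and invoking \eqref{CDG:Ineq_H11} gives \eqref{CDG:Ineq_H12}. The main obstacle I anticipate is the bookkeeping in the $l_2,l_3,l_4$ and $s$ terms: one must track the powers of $h$ carefully so that the edge terms involving $\be_h$ or $\eps_h$, after trace and inverse estimates, produce exactly $\trb{\be_h}$ or $\|\eps_h\|_h$ with no leftover negative power of $h$ — this is where the specific definitions \eqref{CDG:q_h} of $\|\cdot\|_h$ and \eqref{CDG:trb} of $\trb{\cdot}$, and the averaging identities \eqref{v_averjump}-\eqref{q_averjump}, are used in an essential way, and it is also the reason the stabilizer $s(p_h,q)$ had to be kept in the scheme. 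A secondary subtlety is that the estimate for $l_1$ needs the approximation property of $\nabla_w$ acting on $\bu-Q_h\bu$, which follows from \eqref{CDG:Eq_proj1} plus a standard projection/inverse argument but should be stated explicitly as an auxiliary estimate before the main chain of inequalities.
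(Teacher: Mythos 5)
Your proposal follows essentially the same route as the paper: test the error equations with $\bv=\be_h$, $q=\eps_h$ and subtract to get $\trb{\be_h}^2+\|\eps_h\|_h^2$ on the left, bound the consistency terms $l_1,\dots,l_4$ and $s(\dQ_hp,\cdot)$ by $Ch^k(\|\bu\|_{k+1}+\|p\|_k)$ times $\trb{\be_h}$ or $\|\eps_h\|_h$ exactly as in the paper's auxiliary Lemma \ref{CDG:lemma_error_ineq}, and then derive \eqref{CDG:Ineq_H12} from the inf-sup condition together with the boundedness of $a(\cdot,\cdot)$ and \eqref{CDG:Ineq_H11}. The only cosmetic difference is that you sketch the $l_i$ estimates in-line whereas the paper collects them in an appendix lemma.
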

\begin{proof}
    Let $\bv=\be_h$ in (\ref{CDG:Eq_err1}) and $q=\eps_h$ in (\ref{CDG:Eq_err2}), we have
    \begin{align*}
        \trb{\be_h}^2+\|\eps_h\|_h^2=-l_1(\bu,\be_h)+l_2(\bu,\be_h)-l_3(p,\be_h)-l_4(\bu,\eps_h)+s(\dQ_hp,\eps_h).
    \end{align*}
    From the estimates (\ref{CDG:Ineq_l4})-(\ref{CDG:Ineq_s}), we obtain
    \begin{align*}
        \trb{\be_h}^2+\|\eps_h\|_h^2\le Ch^k(\|\bu\|_{k+1}+\|p\|_k)(\trb{\be_h}+\|\eps_h\|_h),
    \end{align*}
    which implies (\ref{CDG:Ineq_H11}).
    
    Let $p=\eps_h$ in (\ref{CDG:Eq_err1}), from the inf-sup condition (\ref{CDG:infsup}), Lemma (\ref{CDG:lemma_bound}) and (\ref{CDG:Ineq_l4})-(\ref{CDG:Ineq_l6}), it follows that
    \begin{align*}
        (C_1h\trb{\eps_h}_1-C_2\|\eps_h\|_h)\trb{\bv}&\le|b(\bv,\eps_h)|\\
        &\le a(\be_h,\bv)+l_1(\bu,\bv)+l_2(\bu,\bv)+l_3(p,\bv)\\
        &\le\trb{\be_h}\trb{\bv}+Ch^k(\|\bu\|_{k+1}+\|p\|_k)\trb{\bv}.
    \end{align*}
    Combining the estimates above, we arrive at
    \begin{align*}
        h\trb{\eps_h}_1&\le\trb{\be_h}+\|\eps_h\|_h+Ch^k(\|\bu\|_{k+1}+\|p\|_k)\\
        &\le Ch^k(\|\bu\|_{k+1}+\|p\|_k),
    \end{align*}
    which completes the proof.
\end{proof}

In order to obtain $L^2$ error estimate, we consider the following dual problem: seek $(\bpsi;\xi)\in[H^2(\Omega)]^d\times H^1(\Omega)$ satisfying 
\begin{align}
    -\Delta\bpsi+\kappa^{-1}\bpsi+\nabla \xi &= \be_h \quad \text{in}~\Omega, \label{CDG:Eq_dual1}\\ 
    \nabla \cdot\bpsi &= 0 \quad \text{in}~\Omega, \label{CDG:Eq_dual2}\\
    \bpsi &= \b0 \quad \text{on}~\partial\Omega. \label{CDG:Eq_dual3}
\end{align}
Assume that the following regularity condition holds,
\begin{align}
    \|\bpsi\|_2+\|\xi\|_1\le C\|\be_h\|. \label{CDG:pri_cond}
\end{align}

\begin{theorem} \label{CDG:Thm_L2}
    Let $(\bu;p)\in[H^1_0(\Omega)\cap H^{k+1}(\Omega)]^d\times(L^2_0(\Omega)\cap H^k(\Omega))$ be the exact solution of (\ref{Eq:Brinkman1})-(\ref{Eq:Brinkman3}) and $(\bu_h;p_h)\in V_h^0\times W_h$ be the solution of (\ref{CDG:algo1})-(\ref{CDG:algo2}), respectively. Then there exists a constant $C$ such that 
    \begin{align}
        \|\be_h\|\le Ch^{k+1}(\|\bu\|_{k+1}+\| p\|_k). \label{CDG:Ineq_L2}
    \end{align}
\end{theorem}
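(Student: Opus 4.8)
The proof will be a duality (Aubin--Nitsche) argument based on the auxiliary problem (\ref{CDG:Eq_dual1})--(\ref{CDG:Eq_dual3}) and the regularity estimate (\ref{CDG:pri_cond}). Starting from $\|\be_h\|^2=(\be_h,\be_h)$, I would replace the second factor by the right-hand side $-\Delta\bpsi+\kappa^{-1}\bpsi+\nabla\xi$ of the dual momentum equation and then carry out exactly the element-wise integration by parts that led to (\ref{CDG:Eq_test1}) in the proof of Lemma \ref{CDG:lemma_error}, now with $(\bpsi;\xi)$ in the role of the exact data and $\be_h\in V_h^0$ in the role of the test function. Using the projection identities (\ref{CDG:Eq_proj1})--(\ref{CDG:Eq_proj2}), the definitions of $\nabla_w$ and $\tilde\nabla_w$, and $(\kappa^{-1}\bpsi,\be_h)=(\kappa^{-1}Q_h\bpsi,\be_h)$, this gives
\[
\|\be_h\|^2=a(Q_h\bpsi,\be_h)+b(\be_h,\dQ_h\xi)+l_1(\bpsi,\be_h)-l_2(\bpsi,\be_h)+l_3(\xi,\be_h).
\]
In the same way, testing the dual mass equation (\ref{CDG:Eq_dual2}) against $\eps_h\in W_h$ and integrating by parts (as in the second part of the proof of Lemma \ref{CDG:lemma_error}) yields the auxiliary relation $b(Q_h\bpsi,\eps_h)=l_4(\bpsi,\eps_h)$.

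Next I would remove the two Galerkin forms $a(Q_h\bpsi,\be_h)$ and $b(\be_h,\dQ_h\xi)$ by means of the primal error equations. By the symmetry of $a(\cdot,\cdot)$, choosing $\bv=Q_h\bpsi$ in (\ref{CDG:Eq_err1}) gives $a(Q_h\bpsi,\be_h)=-b(Q_h\bpsi,\eps_h)-l_1(\bu,Q_h\bpsi)+l_2(\bu,Q_h\bpsi)-l_3(p,Q_h\bpsi)$, while choosing $q=\dQ_h\xi$ in (\ref{CDG:Eq_err2}) gives $b(\be_h,\dQ_h\xi)=s(\eps_h,\dQ_h\xi)+l_4(\bu,\dQ_h\xi)-s(\dQ_hp,\dQ_h\xi)$. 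Substituting both and using $b(Q_h\bpsi,\eps_h)=l_4(\bpsi,\eps_h)$ to cancel the mixed pressure term, $\|\be_h\|^2$ is expressed as a finite combination of consistency functionals only: those pairing the dual data or the projection error $\xi-\dQ_h\xi$ against the discrete errors, namely $l_1(\bpsi,\be_h)$, $l_2(\bpsi,\be_h)$, $l_3(\xi,\be_h)$, $l_4(\bpsi,\eps_h)$, $s(\eps_h,\dQ_h\xi)$; and those pairing the primal approximation errors $\bu-Q_h\bu$, $\nabla\bu-\bQ_h\nabla\bu$, $p-\dQ_hp$ against the projected dual quantities, namely $l_1(\bu,Q_h\bpsi)$, $l_2(\bu,Q_h\bpsi)$, $l_3(p,Q_h\bpsi)$, $l_4(\bu,\dQ_h\xi)$, $s(\dQ_hp,\dQ_h\xi)$.

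The remaining task is to show that every term above is $O\big(h^{k+1}(\|\bu\|_{k+1}+\|p\|_k)\|\be_h\|\big)$. The terms of the first group are handled with the same $l_i$- and $s$-bounds already used in the proof of Theorem \ref{CDG:Thm_H1}: one factor is $\trb{\be_h}$, $\|\eps_h\|_h$ or $\trb{\eps_h}_1$, of order $h^{k}$, $h^{k}$ and $h^{k-1}$ (times $\|\bu\|_{k+1}+\|p\|_k$) by (\ref{CDG:Ineq_H11})--(\ref{CDG:Ineq_H12}), and the other is an approximation error of the dual data $\bpsi\in[H^2(\Omega)]^d$ or $\xi\in H^1(\Omega)$ carrying the matching power of $h$; after (\ref{CDG:pri_cond}) each product is $O(h^{k+1})(\|\bu\|_{k+1}+\|p\|_k)\|\be_h\|$. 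For the second group, $l_4(\bu,\dQ_h\xi)$, $s(\dQ_hp,\dQ_h\xi)$ and $s(\eps_h,\dQ_h\xi)$ follow directly, since $\|\dQ_h\xi\|_h\le Ch\|\xi\|_1$ and the boundary approximation of $\bu-Q_h\bu$ and $p-\dQ_hp$ is sharp; the delicate ones are $l_1(\bu,Q_h\bpsi)$, $l_2(\bu,Q_h\bpsi)$ and $l_3(p,Q_h\bpsi)$, for which simply multiplying the two factor-bounds is insufficient. For these I would decompose $Q_h\bpsi=\bpsi-(\bpsi-Q_h\bpsi)$ (and likewise for $\xi$), integrate by parts back into the elements, use the $L^2$-orthogonalities $\bu-Q_h\bu\perp[P_k(T)]^d$, $\nabla\bu-\bQ_h\nabla\bu\perp[P_j(T)]^{d\times d}$ and $p-\dQ_hp\perp P_{k-1}(T)\ni\nabla\!\cdot\!Q_h\bpsi$ to annihilate the element-interior contributions, invoke the divergence constraint $\nabla\!\cdot\!\bpsi=0$, and use the trace continuity of the exact $\bu$, $p$, $\bpsi$ together with the homogeneous boundary data so that the boundary pairings against the smooth parts telescope to zero across interior edges; what remains are products of two genuine approximation errors, again $O(h^{k+1})$. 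Collecting everything gives $\|\be_h\|^2\le Ch^{k+1}(\|\bu\|_{k+1}+\|p\|_k)\|\be_h\|$, and dividing by $\|\be_h\|$ yields (\ref{CDG:Ineq_L2}).

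The main obstacle is precisely the last step for the second group of consistency terms, and especially the pressure term $l_3(p,Q_h\bpsi)$: since the dual pressure $\xi$ has only $H^1$ regularity, crude trace/inverse estimates lose a power of $h$, so one must extract the extra order by combining the $L^2$-orthogonality of the three projections, the constraint $\nabla\!\cdot\!\bpsi=0$, and the interface continuity of the exact solutions, rather than estimating each factor in isolation. A secondary, routine point is that the cell-wise projection $Q_h\bpsi$ need not vanish on $\partial\Omega$; this is absorbed using the $O(h^{k+1/2})$ smallness of $Q_h\bpsi$ (and $Q_h\bu$) on boundary edges, which follows from $\bpsi|_{\partial\Omega}=\b0$ (resp.\ $\bu|_{\partial\Omega}=\b0$).
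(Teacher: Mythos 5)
Your proposal follows essentially the same duality argument as the paper's proof: the same representation of $\|\be_h\|^2$ via the dual problem and the error equations, the same split into the two groups of consistency terms, and the same mechanism (projection orthogonalities, continuity of the exact solution, and $\nabla\cdot\bpsi=0$) for extracting the extra power of $h$ from $l_1(\bu,Q_h\bpsi)$, $l_2(\bu,Q_h\bpsi)$ and $l_3(p,Q_h\bpsi)$. The only difference is one of implementation: the paper realizes your orthogonality step for $l_1$ concretely through an auxiliary $L^2$ projection $\hat{\bQ}_h$ onto $[P_1(T)]^{d\times d}$, for which $\hat{\bQ}_h\nabla\bpsi=\hat{\bQ}_h\nabla_w\bpsi$ and $(\nabla_w(\bu-Q_h\bu),\hat{\bQ}_h\nabla\bpsi)_T=0$, while $l_2$ and $l_3$ turn out to need only the substitution $Q_h\bpsi\mapsto Q_h\bpsi-\bpsi$ plus standard trace and projection estimates.
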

\begin{proof}
    Testing (\ref{CDG:Eq_dual1}) by $\be_h$ yields
    \begin{align*}
        \|\be_h\|^2=(\be_h,\be_h)=-(\Delta\bpsi,\be_h)+(\kappa^{-1}\bpsi,\be_h)+(\nabla\xi,\be_h).
    \end{align*}
    Similar to the proof of Lemma \ref{CDG:lemma_error}, we have
    \begin{align*}
        -(\Delta\bpsi,\be_h)&=(\nabla_w(Q_h\bpsi),\nabla_w\be_h)+l_1(\bpsi,\be_h)-l_2(\bpsi,\be_h),\\
    \end{align*}
    and
    \begin{align*}
        (\nabla\xi,\be_h)&=(\tilde{\nabla}_w(\dQ_h\xi),\be_h)+l_3(\xi,\be_h).\\
    \end{align*}
    Combining the definition of $a(\cdot,\cdot)$ and $b(\cdot,\cdot)$ gives
    \begin{align}
        \|\be_h\|^2=a(Q_h\bpsi,\be_h)+b(\be_h,\dQ_h\bpsi)+l_1(\bpsi,\be_h)-l_2(\bpsi,\be_h)+l_3(\xi,\be_h). \label{CDG:Eq_duala}
    \end{align}
    Similarly, testing (\ref{CDG:Eq_dual2}) by $\eps_h$ yields
    \begin{align}
        b(Q_h\bpsi,\eps_h)=l_4(\bpsi,\eps_h). \label{CDG:Eq_dualb}
    \end{align}
     Let $\bv=Q_h\bpsi$ and $q=\dQ_h\xi$ in (\ref{CDG:Eq_err1}) and (\ref{CDG:Eq_err2}), we have
    \begin{align}
        a(\be_h,Q_h\bpsi)+b(Q_h\bpsi,\eps_h)&=-l_1(\bu,Q_h\bpsi)+l_2(\bu,Q_h\bpsi)-l_3(p,Q_h\bpsi),\label{CDG:Eq_dualc}\\
        b(\be_h,\dQ_h\xi)-s(\eps_h,\dQ_h\xi)&=l_4(\bu,\dQ_h\xi)-s(\dQ_hp,\dQ_h\xi). \label{CDG:Eq_duald}
    \end{align}
    With (\ref{CDG:Eq_duala})-(\ref{CDG:Eq_duald}), we obtain
    \begin{align*}
        \|\be_h\|^2=&-l_1(\bpsi,\be_h)+l_2(\bpsi,\be_h)-l_3(\xi,\be_h)+l_4(\bpsi,\eps_h)-s(\dQ_h\bpsi,\eps_h)\\
        &+(l_1(\bu,Q_h\bpsi)-l_2(\bu,Q_h\bpsi)+l_3(p,Q_h\bpsi)-l_4(\bu,\dQ_h\xi)+s(\dQ_hp,\dQ_h\xi)).
    \end{align*}
    
    Let $\hat{\bQ}_h$ be the projection operator from $[L^2(T)]^{d\times d}$ onto $[P_1(T)]^{d\times d}$. For any $q\in P_1(T)$, we have
    \begin{align*}
        (\hat{\bQ}_h\nabla\bpsi,q)_T=(\nabla\bpsi,q)_T=-(\bpsi,\nabla\cdot q)_T+\seq{\bpsi,q\bn}_{\partial T}=(\nabla_w\bpsi,q)_T=(\hat{\bQ}_h\nabla_w\bpsi,q)_T,
    \end{align*}
    which implies $\hat{\bQ}_h\nabla\bpsi$ is equal to $\hat{\bQ}_h\nabla_w\bpsi$ on each cell $T$. 
    
    According to the definition of $\nabla_w$, the following equation holds true
    \begin{align*}
        (\nabla_w(\bu-Q_h\bu),\hat{\bQ}_h\nabla_w\bpsi)_T=&-(\bu-Q_h\bu,\nabla\cdot\hat{\bQ}_h\nabla_w\bpsi)_T\\
        &+\langle\set{\bu-Q_h\bu},\hat{\bQ}_h\nabla_w\bpsi\cdot\bn\rangle_{\partial T}.
    \end{align*}
    
    Notice that $k$ is an integer not less than one, from the definition of the projection operator $Q_h$, we obtain
    \begin{align}
        (\nabla_w(\bu-Q_h\bu),\hat{\bQ}_h\nabla\bpsi)_T=(\nabla_w(\bu-Q_h\bu),\hat{\bQ}_h\nabla_w\bpsi)_T=0. \label{CDG:Eq_projQ1}
    \end{align}

    Then,  using the projection inequalities (\ref{proj_ineq1})-(\ref{proj_ineq2}), and summing over all the cells, we arrive at
    \begin{align*}
        &\sumT(\nabla_w(\bu-Q_h\bu),\nabla_w\bpsi)_T\\
        =&\sumT(\nabla_w(\bu-Q_h\bu),\nabla_w\bpsi-\hat{\bQ}_h\nabla\bpsi)_T\\
        =&\sumT(\nabla_w(\bu-Q_h\bu),\nabla\bpsi-\hat{\bQ}_h\nabla \bpsi)_T\\
        \le& \left(\sumT\|\nabla_w(\bu-Q_h\bu)\|^2_T\right)^\frac{1}{2}\left(\sumT\|\nabla\bpsi-\hat{\bQ}_h\nabla \bpsi\|^2_T\right)^\frac{1}{2}\\
        \le& Ch^{k+1}\|\bu\|_{k+1}\|\bpsi\|_2. 
    \end{align*}
    For $l_1(\bu,Q_h\bpsi)$, we get
    \begin{align*}
        &|l_1(\bu,Q_h\bpsi)|\\
        =&\Bigg|\sumT(\nabla_w(\bu-Q_h\bu),\nabla_w Q_h\bpsi)_T\Bigg|\\
        \le&\Bigg|\sumT(\nabla_w(\bu-Q_h\bu),\nabla_w\bpsi)_T\Bigg|+\Bigg|\sumT(\nabla_w(\bu-Q_h\bu),\nabla_w (Q_h\bpsi-\bpsi))_T\Bigg|\\
        \le& Ch^{k+1}\|\bu\|_{k+1}\|\bpsi\|_2.
    \end{align*}
    Similarly, we have
    \begin{align*}
        |l_2(\bu,Q_h\bpsi)|&=\Bigg|\sumT\seq{(\nabla\bu-\bQ_h\nabla\bu)\cdot\bn,Q_h\bpsi-\set{Q_h\bpsi}}_{\partial T}\Bigg|\\
        &=\Bigg|\sumT\seq{(\nabla\bu-\bQ_h\nabla\bu)\cdot\bn,Q_h\bpsi-\bpsi+\set{\bpsi-Q_h\bpsi}}_{\partial T}\Bigg|\\
        &\le C\left(\sumT h\|\nabla\bu-\bQ_h\nabla\bu\|_{\partial T}^2\right)^{\frac{1}{2}}\left(\sumT h^{-1}\|Q_h\bpsi-\bpsi\|_{\partial T}^2\right)^{\frac{1}{2}}\\
        &\le Ch^{k+1}\|\bu\|_{k+1}\|\bpsi\|_2.
    \end{align*}
    According the projection inequality (\ref{proj_ineq3}), we have
    \begin{align*}
        |l_3(p,Q_h\bpsi)|&=\Bigg|\sumT\seq{p-\dQ_hp,Q_h\bpsi\cdot\bn}_{\partial T}\Bigg|\\
        &=\Bigg|\sumT\seq{p-\dQ_hp,(Q_h\bpsi-\bpsi)\cdot\bn}_{\partial T}\Bigg|\\
        &\le C\left(\sumT h\|p-\dQ_hp\|_{\partial T}^2\right)^{\frac{1}{2}}\left(\sumT h^{-1}\|Q_h\bpsi-\bpsi\|_e^2\right)^{\frac{1}{2}}\\
        &\le Ch^{k+1}\|p\|_k\|\bpsi\|_2.
    \end{align*}
    In the above derivation, we have used the following fact
    \begin{align*}
        \sumT\seq{p-\dQ_hp,\bpsi\cdot\bn}_{\partial T}=0.
    \end{align*}
    Similar to $l_2(\bu,Q_h\bpsi)$, we get
    \begin{align*}
        |l_4(\bu,\dQ_h\xi)|\le Ch^{k+1}\|\bu\|_{k+1}\|\xi\|_1.
    \end{align*}
    Using the definition of $s(\cdot,\cdot)$ and the projection inequality (\ref{proj_ineq3}), we have
    \begin{align*}
        |s(\dQ_hp,\dQ_h\xi)|&=\Bigg|\sumE h\seq{[\![\dQ_hp]\!],[\![\dQ_h\xi]\!]}_e\Bigg|\\
        &=\Bigg|\sumE h\seq{[\![\dQ_hp-p]\!],[\![\dQ_h\xi-\xi]\!]}_e\Bigg|\\
        &\le C\left(\sumT h\|p-\dQ_hp\|_{\partial T}^2\right)^{\frac{1}{2}}\left(\sumT h\|\xi-\dQ_h\xi\|_{\partial T}^2\right)^{\frac{1}{2}}\\
        &\le Ch^{k+1}\|p\|_k\|\xi\|_1.
    \end{align*}
    According to (\ref{CDG:Ineq_l4})-(\ref{CDG:Ineq_s}) and the above five estimates, it follows that
    \begin{align*}
        \|\be_h\|^2\le (Ch(\trb{\be_h}+\|\eps_h\|_h)+Ch^{k+1}(\|\bu\|_{k+1}+\| p\|_k))(\|\bpsi\|_2+\|\xi\|_1).
    \end{align*}
    With the regularity assumption (\ref{CDG:pri_cond}) and $H^1$ error estimates (\ref{CDG:Ineq_H11}), we have
    \begin{align*}
        \|\be_h\|^2\le Ch^{k+1}(\|\bu\|_{k+1}+\| p\|_k)\|\be_h\|,
    \end{align*}
    which implies (\ref{CDG:Ineq_L2}).
\end{proof}
\section{Numerical Experiments} \label{EXAMPLE}
In this section, we present several examples in two dimensional domains to verify the stability and order of convergence established in Section \ref{CDG:ERROR_ESTIMATE}.

As before, let $(\bu;p)$ be the exact solution of (\ref{Eq:Brinkman1})-(\ref{Eq:Brinkman3}), $(\bu_h;p_h)$ be the solution of (\ref{CDG:algo1})-(\ref{CDG:algo2}). Denote  $\be_h=Q_h\bu-\bu_h$ and $\eps_h=\dQ_hp-p_h$.

\subsection{Example 1}
Taking $\Omega=(0,1)\times(0,1)$, the exact solution is given as follows:
\begin{align*}
    \bu=\left(\begin{matrix}sin(2\pi x)cos(2\pi y)\\-cos(2\pi x)sin(2\pi y)\end{matrix}\right), \quad p=x^2y^2-\frac{1}{9}.  
\end{align*}
Consider the following permeability
\begin{align*}
    \kappa^{-1}=a(sin({2\pi x})+1.1), 
\end{align*}
where $a$ is a given positive constant. According to the above parameters, the momentum source term $\bf$ and the boundary value $\bg=\bu|_{\partial\Omega}$ can be calculated.

For uniform triangular partition, we choose $k=1$, $\mu=1$, $0.01$ and $a=1$, $10^4$. Table \ref{CDGEX1:1_1_1}-\ref{CDGEX1:1_0.01_10^4} show the errors and orders of convergence accordingly. For uniform rectangular partition and polygonal partition, we choose $k=2$, $3$, $\mu=10^4$ and $a=1$. Table \ref{rectCDGEX1:2_1_10^4}-\ref{polyCDGEX1:3_1_10^4} show the errors and orders of convergence accordingly.

As can be seen from the data in the tables, the numerical experiment results are consistent with the theoretical analysis, and both reach the optimal order of convergence. At the same time, the accuracy and stability of the numerical scheme is verified when the permeability $\kappa$ is highly varying.  

\begin{table}[!htbp]
    \centering
    \caption{Errors and orders of convergence on triangular partition as $k=1,~j=2,~\mu=1,~a=1$}
    \label{CDGEX1:1_1_1}
    \begin{tabular}{|c|c|c|c|c|c|c|c|c|c|c|c|}
        \hline
        $h$ & $\trb{\be_h}$ & order & $\|\be_h\|$ & order & $\|\eps_h\|$ & order \\ \hline
        $1/4$ & 1.5213e+00 &  & 1.02630e-01 &  & 4.3130e-01 &  \\ \hline
        $1/8$ & 6.5903e-01 & 1.2069 & 3.7485e-02 & 1.4531 & 3.1397e-01 & 0.4581 \\ \hline
        $1/16$ & 2.5867e-01 & 1.3493 & 1.0962e-02 & 1.7738 & 1.3347e-01 & 1.2341 \\ \hline
        $1/32$ & 1.0300e-01 & 1.3285 & 2.8870e-03 & 1.9249 & 4.7939e-02 & 1.4773 \\ \hline
        $1/64$ & 4.3307e-02 & 1.2499 & 7.3057e-04 & 1.9825 & 1.8019e-02 & 1.4116 \\ \hline
        $1/128$ & 1.9323e-02 & 1.1643 & 1.8291e-04 & 1.9979 & 7.5708e-03 & 1.2510 \\ \hline
    \end{tabular}
\end{table}

\begin{table}[!htbp]
    \centering
    \caption{Errors and orders of convergence on triangular partition as $k=1,~j=2,~\mu=0.01,~a=1$}
    \label{CDGEX1:1_0.01_1}
    \begin{tabular}{|c|c|c|c|c|c|c|c|c|c|c|c|}
        \hline
        $h$ & $\trb{\be_h}$ & order & $\|\be_h\|$ & order & $\|\eps_h\|$ & order \\ \hline
        $1/4$ & 1.2093e+00 &  & 2.2360e-01 &  & 4.1153e-02 &  \\ \hline
        $1/8$ & 4.8033e-01 & 1.3321 & 7.8492-02 & 1.5103 & 1.8900e-02 & 1.1226 \\ \hline
        $1/16$ & 1.8275e-01 & 1.3941 & 2.2513e-02 & 1.8018 & 8.5580e-03 & 1.1430 \\ \hline
        $1/32$ & 7.1280e-02 & 1.3583 & 5.8547e-03 & 1.9431 & 3.9645e-03 & 1.1101 \\ \hline
        $1/64$ & 2.9236e-02 & 1.2858 & 1.4752e-03 & 1.9886 & 1.8795e-03 & 1.0768 \\ \hline
        $1/128$ & 1.2709e-02 & 1.2019 & 3.6902e-04 & 1.9992 & 9.1017e-04 & 1.0462 \\ \hline
    \end{tabular}
\end{table}

\begin{table}[!htbp]
    \centering
    \caption{Errors and orders of convergence on triangular partition as $k=1,~j=2,~\mu=1,~a=10^4$}
    \label{CDGEX1:1_1_10^4}
    \begin{tabular}{|c|c|c|c|c|c|c|c|c|c|c|c|}
        \hline
        $h$ & $\trb{\be_h}$ & order & $\|\be_h\|$ & order & $\|\eps_h\|$ & order \\ \hline
        $1/4$ & 2.2860e+00 &  & 2.9740e-02 &  & 1.6643e-01 &  \\ \hline
        $1/8$ & 3.6162e-01 & 2.6603 & 3.9858e-03 & 2.8995 & 1.9585e-01 & -0.2349 \\ \hline
        $1/16$ & 1.5733e-01 & 1.2007 & 1.4717e-03 & 1.4374 & 1.8216e-01 & 0.1046 \\ \hline
        $1/32$ & 8.4025e-02 & 0.9049 & 5.4615e-04 & 1.4301 & 1.3765e-01 & 0.4042 \\ \hline
        $1/64$ & 4.1075e-02 & 1.0326 & 1.7638e-04 & 1.6306 & 7.6546e-02 & 0.8466 \\ \hline
        $1/128$ & 1.9215e-02 & 1.0960 & 5.0498e-05 & 1.8044 & 3.0815e-02 & 1.3127 \\ \hline
    \end{tabular}
\end{table}

\begin{table}[!htbp]
    \centering
    \caption{Errors and orders of convergence on triangular partition as $k=1,~j=2,~\mu=0.01,~a=10^4$}
    \label{CDGEX1:1_0.01_10^4}
    \begin{tabular}{|c|c|c|c|c|c|c|c|c|c|c|c|}
        \hline
        $h$ & $\trb{\be_h}$ & order & $\|\be_h\|$ & order & $\|\eps_h\|$ & order \\ \hline
        $1/4$ & 1.0369e+00 &  & 6.7825e-02 &  & 1.2495e-01 &  \\ \hline
        $1/8$ & 4.4656e-01 & 1.2154 & 2.0320e-02 & 1.7389 & 8.4347e-02 & 0.5670 \\ \hline
        $1/16$ & 1.7634e-01 & 1.3405 & 7.3711e-03 & 1.4630 & 4.7227e-02 & 0.8367 \\ \hline
        $1/32$ & 6.9966e-02 & 1.3336 & 2.4262e-03 & 1.6032 & 1.9975e-02 & 1.2414 \\ \hline
        $1/64$ & 2.9008e-02 & 1.2702 & 6.8638e-04 & 1.8216 & 6.4779e-03 & 1.6246 \\ \hline
        $1/128$ & 1.2675e-02 & 1.1945 & 1.7914e-04 & 1.9379 & 1.9103e-03 & 1.7618 \\ \hline
    \end{tabular}
\end{table}

\begin{table}[!htbp]
    \centering
    \caption{Errors and orders of convergence on rectangular partition as $k=2,~j=5,~\mu=1,~a=10^4$}
    \label{rectCDGEX1:2_1_10^4}
    \begin{tabular}{|c|c|c|c|c|c|c|c|c|c|c|c|}
         \hline
         $h$ & $\trb{\be_h}$ & order & $\|\be_h\|$ & order & $\|\eps_h\|$ & order \\ \hline
         $1/4$ & 1.4777e+00 &  & 1.7715e-02 &  & 5.4279e-01 &  \\ \hline
         $1/8$ & 2.5250e-01 & 2.5490 & 1.7706e-03 & 3.3227 & 1.2107e-01 & 2.1645 \\ \hline
         $1/16$ & 5.5071e-02 & 2.1969 & 2.4002e-04 & 2.8830 & 3.2960e-02 & 1.8771 \\ \hline
         $1/32$ & 1.2294e-02 & 2.1633 & 3.2057e-05 & 2.9045 & 6.2549e-03 & 2.3976 \\ \hline
         $1/64$ & 2.7958e-03 & 2.1367 & 3.8025e-06 & 3.0756 & 1.0068e-03 & 2.6352 \\ \hline
    \end{tabular}
\end{table}

\begin{table}[!htbp]
    \centering
    \caption{Errors and orders of convergence on rectangular partition as $k=3,~j=6,~\mu=1,~a=10^4$}
    \label{rectCDGEX1:3_1_10^4}
    \begin{tabular}{|c|c|c|c|c|c|c|c|c|c|c|c|}
         \hline
         $h$ & $\trb{\be_h}$ & order & $\|\be_h\|$ & order & $\|\eps_h\|$ & order \\ \hline
         $1/4$ & 6.4024e-01 &  & 5.3190e-03 &  & 4.8987e-01 &  \\ \hline
         $1/8$ & 7.5994e-02 & 3.0746 & 3.6500e-04 & 3.8652 & 4.0092e-02 & 3.6110 \\ \hline
         $1/16$ & 7.9936e-03 & 3.2490 & 2.2508e-05 & 4.0194 & 2.7254e-03 & 3.8788 \\ \hline
         $1/32$ & 8.2601e-04 & 3.2746 & 1.3259e-06 & 4.0854 & 1.8280e-04 & 3.8981 \\ \hline
         $1/64$ & 8.8561e-05 & 3.2214 & 7.8119e-08 & 4.0852 & 1.1620e-05 & 3.9756 \\ \hline
    \end{tabular}
\end{table}

\begin{table}[!htbp]
    \centering
    \caption{Errors and orders of convergence on polygonal partition as $k=2,~j=8,~\mu=1,~a=10^4$}
    \label{polyCDGEX1:2_1_10^4}
    \begin{tabular}{|c|c|c|c|c|c|c|c|c|c|c|c|}
         \hline
         $h$ & $\trb{\be_h}$ & order & $\|\be_h\|$ & order & $\|\eps_h\|$ & order \\ \hline
         $1/4$ & 1.3518e+00 &  & 1.5787e-02 &  & 5.9253e-01 &  \\ \hline
         $1/8$ & 3.4537e-01 & 1.9687 & 2.3238e-03 & 2.7642 & 1.8185e-01 & 1.7041 \\ \hline
         $1/16$ & 1.0534e-01 & 1.7130 & 3.8524e-04 & 2.5927 & 3.8610e-02 & 2.2357 \\ \hline
         $1/32$ & 2.7395e-02 & 1.9431 & 5.2055e-05 & 2.8876 & 4.9117e-03 & 2.9747 \\ \hline
         $1/64$ & 7.0084e-03 & 1.9668 & 6.6314e-06 & 2.9727 & 7.1932e-04 & 2.7715 \\ \hline
    \end{tabular}
\end{table}

\begin{table}[!htbp]
    \centering
    \caption{Errors and orders of convergence on polygonal partition as $k=3,~j=9,~\mu=1,~a=10^4$}
    \label{polyCDGEX1:3_1_10^4}
    \begin{tabular}{|c|c|c|c|c|c|c|c|c|c|c|c|}
         \hline
         $h$ & $\trb{\be_h}$ & order & $\|\be_h\|$ & order & $\|\eps_h\|$ & order \\ \hline
         $1/4$ & 6.2524e-01 &  & 5.2453e-03 &  & 4.0361e-01 &  \\ \hline
         $1/8$ & 7.9629e-02 & 2.9730 & 3.7527e-04 & 3.8050 & 3.5666e-02 & 3.5003 \\ \hline
         $1/16$ & 8.0684e-03 & 3.3029 & 2.4399e-05 & 3.9431 & 2.1461e-03 & 4.0548 \\ \hline
         $1/32$ & 8.2383e-04 & 3.2919 & 1.5631e-06 & 3.9644 & 1.5572e-04 & 3.7847 \\ \hline
         $1/64$ & 8.9162e-05 & 3.2078 & 9.5552e-08 & 4.0319 & 1.1355e-05 & 3.7775 \\ \hline
    \end{tabular}
\end{table}

The rest of examples in this section have the following setting:
\begin{align*}
 k=1,\quad\Omega=(0,1)\times(0,1),\quad\mu=0.01,\quad\bf=\left(\begin{matrix}0\\0\end{matrix}\right),\quad\bg=\left(\begin{matrix}1\\0\end{matrix}\right).
\end{align*}

\subsection{Example 2} \label{EX2}
In this example, the permeability coefficient $\kappa$ is selected as the piecewise constant function with highly varying. The profile of the permeability inverse is shown in Fig. \ref{EX2_kappa}. As we know, this example has no analytic solutions.

In Fig. \ref{EX2_up}, a 128$\times$128 rectangular partition is used to solve this example. The profiles of the pressure and the two components of the velocity for CDG are plotted in Fig. \ref{CDGEX2_p}-\ref{CDGEX2_u2}

\begin{figure}[!htbp]
    \centering
    \subfigure[Profile of $\kappa^{-1}$]{
    \includegraphics[width=0.45\textwidth]{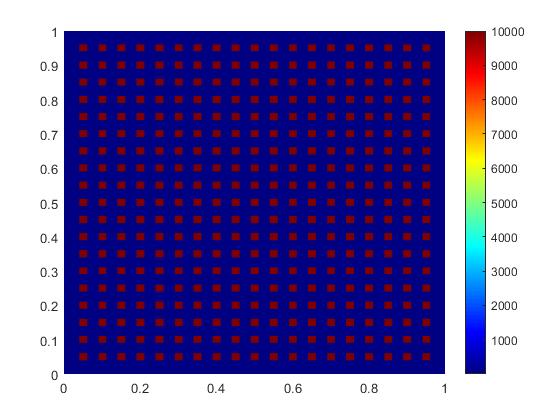} \label{EX2_kappa}
    }
    \subfigure[Profile of p]{
    \includegraphics[width=0.45\textwidth]{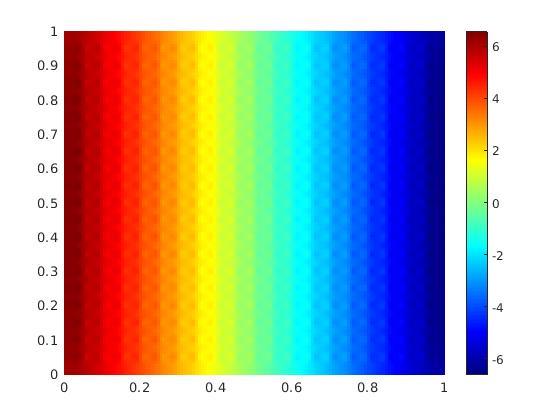} \label{CDGEX2_p}
    }
    \subfigure[Profile of first component of $\bu$]{
    \includegraphics[width=0.45\textwidth]{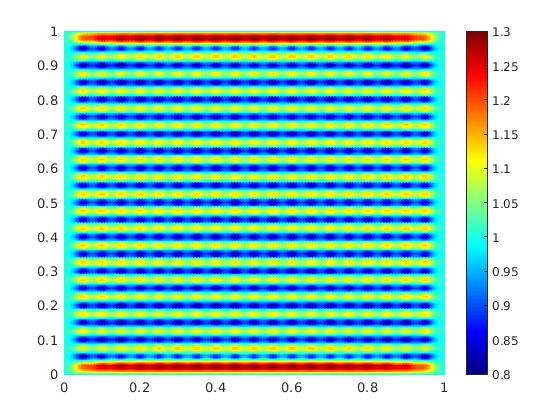} \label{CDGEX2_u1}
    }
    \subfigure[Profile of second component of $\bu$]{
    \includegraphics[width=0.45\textwidth]{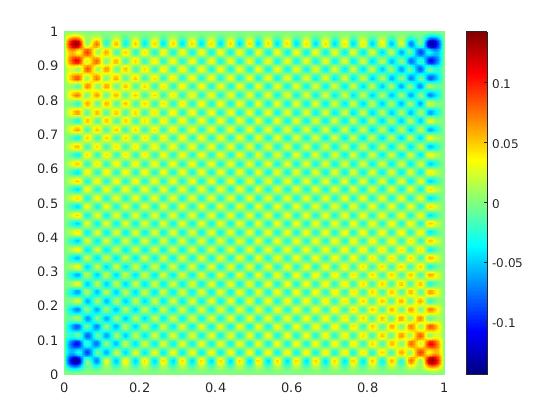} \label{CDGEX2_u2}
    }
    \caption{Profiles of $\kappa^{-1}$ and numerical solution in Ex. 2}
    \label{EX2_up}
\end{figure}

\subsection{Example 3}
In this example, we choose a vuggy medium with the permeability coefficient $\kappa$ highly varying. The profile of the permeability inverse is plotted in Fig. \ref{EX3_kappa}. For solving this example, a 128$\times$128 rectangular partition is used. And the pressure obtained by CDG method is present in Fig. \ref{CDGEX3_p}. The velocity profiles are shown in Fig. \ref{CDGEX3_u1}-\ref{CDGEX3_u2}.

\begin{figure}[!htbp]
    \centering
    \subfigure[Profile of $\kappa^{-1}$]{
    \includegraphics[width=0.45\textwidth]{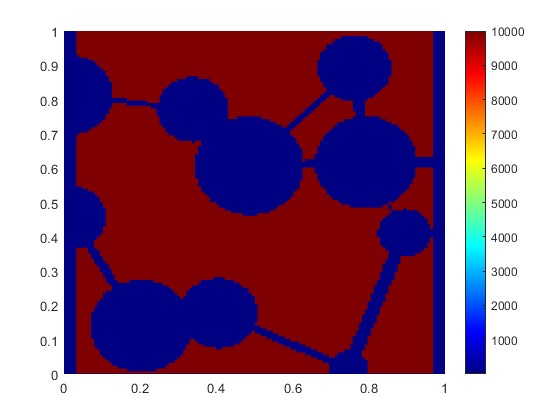} \label{EX3_kappa}
    }
    \subfigure[Profile of p]{
    \includegraphics[width=0.45\textwidth]{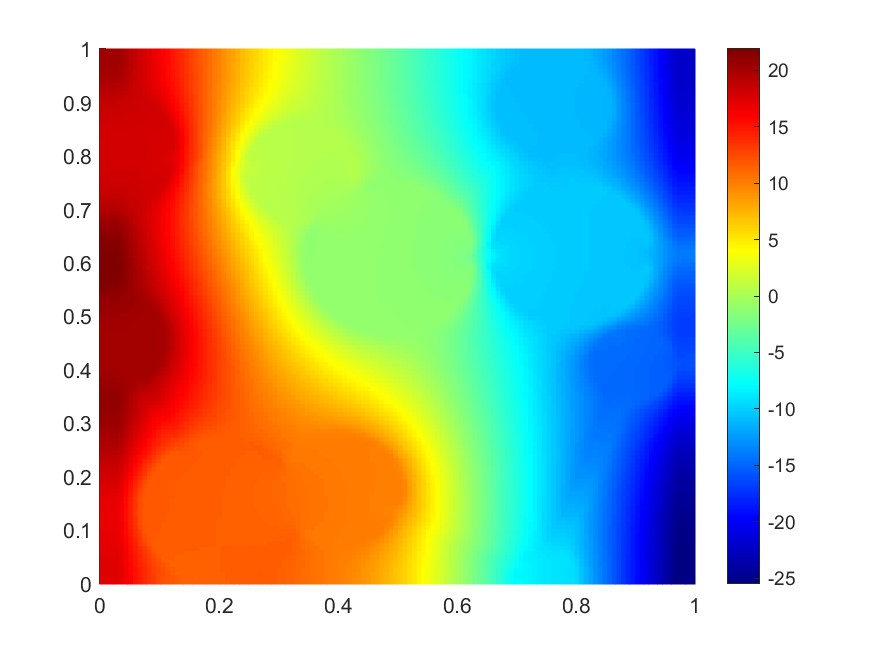} \label{CDGEX3_p}
    }
    \subfigure[Profile of first component of $\bu$]{
    \includegraphics[width=0.45\textwidth]{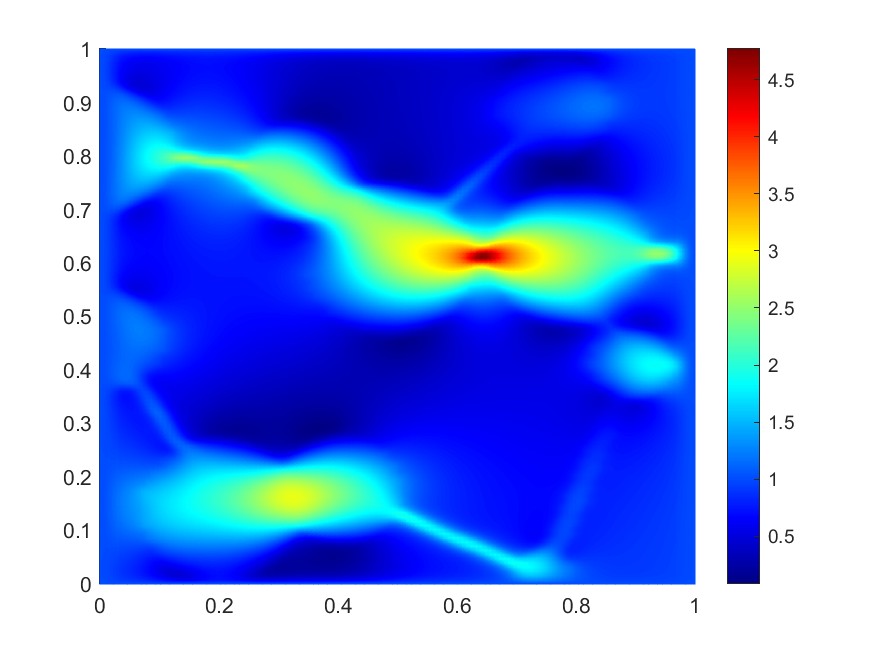} \label{CDGEX3_u1}
    }
    \subfigure[Profile of second component of $\bu$]{
    \includegraphics[width=0.45\textwidth]{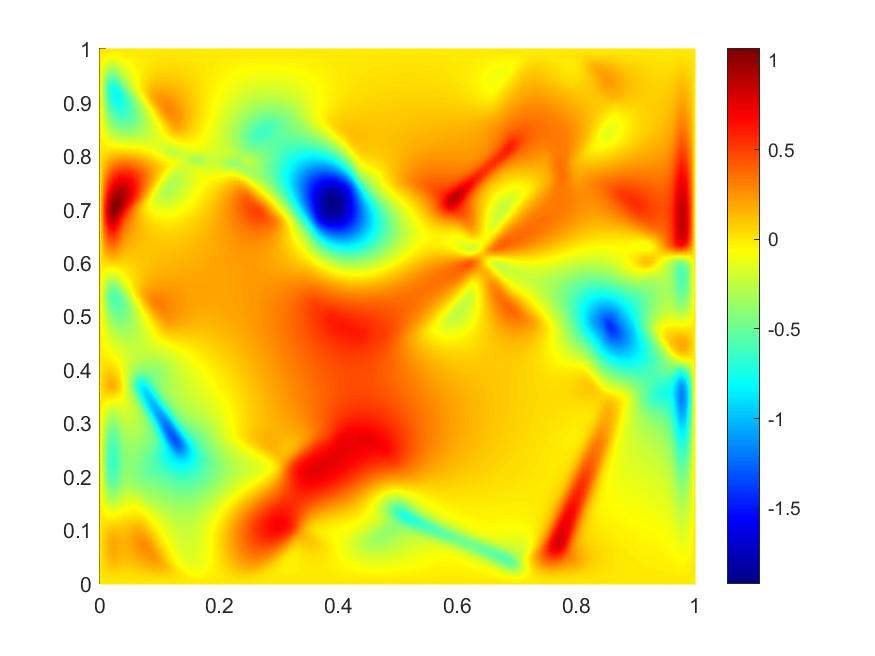} \label{CDGEX3_u2}
    }
    \caption{Profiles of $\kappa^{-1}$ and numerical solution in EX. 3}
    \label{EX3_up}
\end{figure}

\subsection{Example 4}
The fluid flowing in a fibrous material can also be described by Brinkman equations. Fig. \ref{EX4_kappa} shows the inverse of permeability in a common fibrous material. The parameters are the same as in the previous example. We can get the corresponding pressure in Fig. \ref{CDGEX4_p} and velocity in Fig. \ref{CDGEX4_u1}-\ref{CDGEX4_u2} by CDG method.

\begin{figure}[!htbp]
    \centering
    \subfigure[Profile of $\kappa^{-1}$]{
    \includegraphics[width=0.45\textwidth]{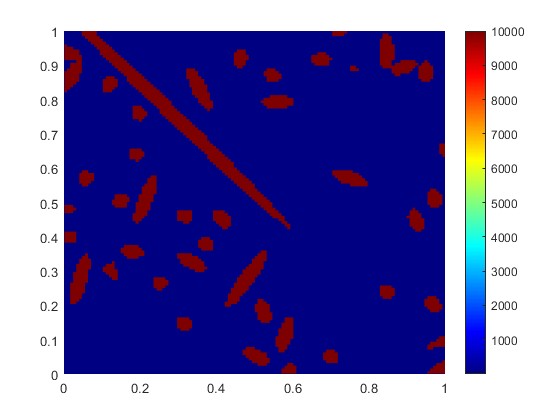} \label{EX4_kappa}
    }
    \subfigure[Profile of p]{
    \includegraphics[width=0.45\textwidth]{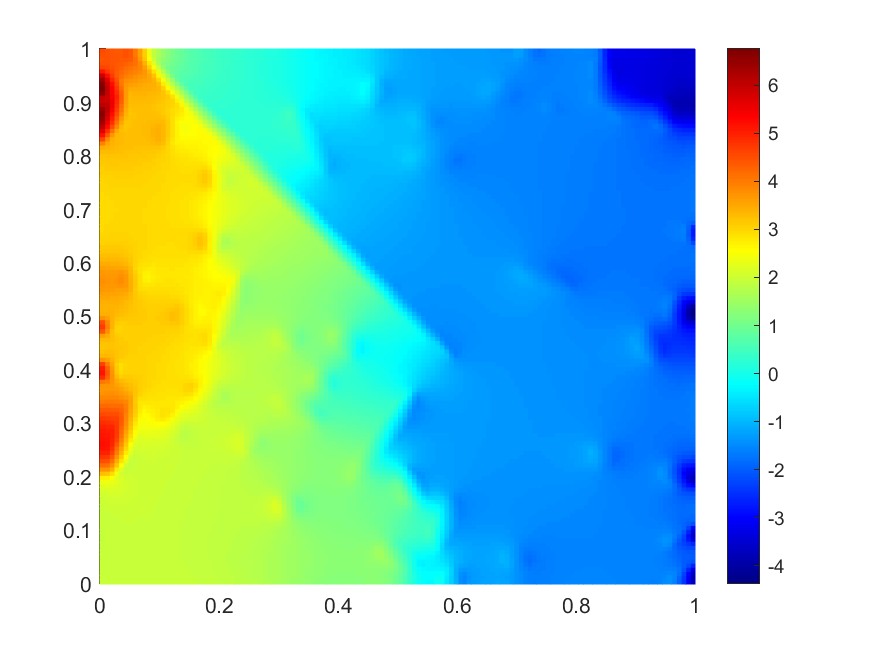} \label{CDGEX4_p}
    }
    \subfigure[Profile of first component of $\bu$]{
    \includegraphics[width=0.45\textwidth]{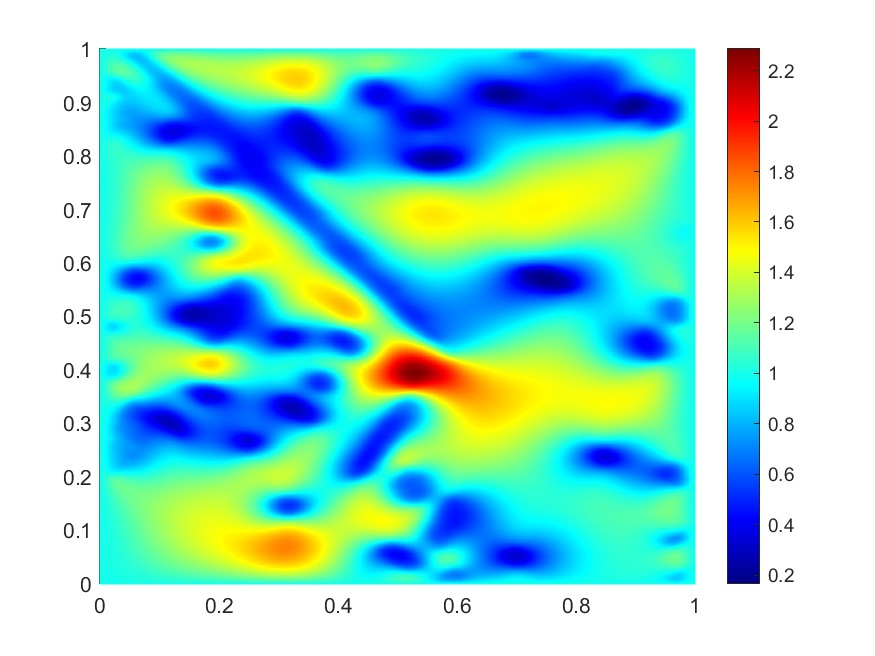} \label{CDGEX4_u1}
    }
    \subfigure[Profile of second component of $\bu$]{
    \includegraphics[width=0.45\textwidth]{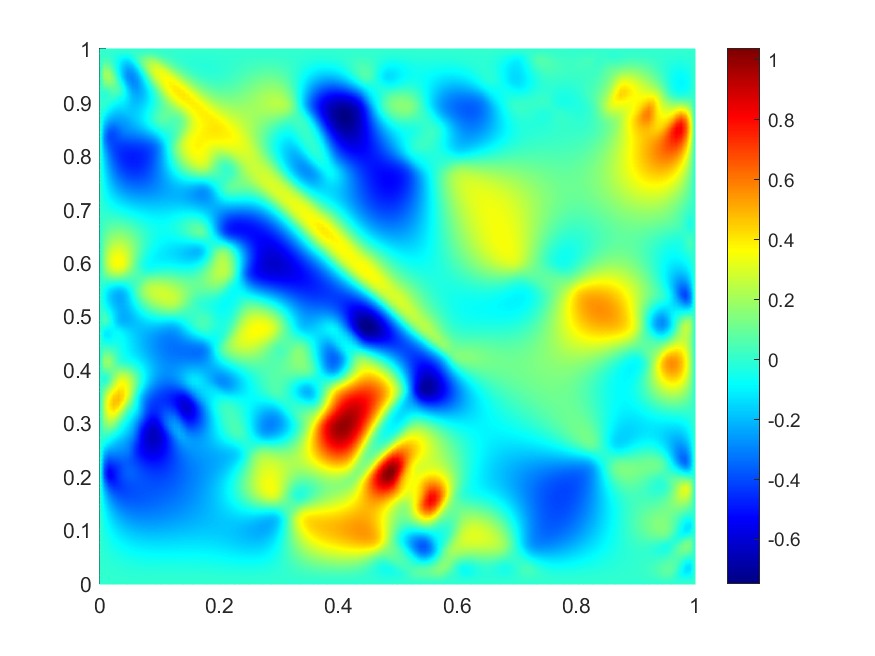} \label{CDGEX4_u2}
    }
    \caption{Profiles of $\kappa^{-1}$ and numerical solution in EX. 4}
    \label{EX4_up}
\end{figure}
\begin{appendices} 
\section{Some Inequality Estimates} \label{APPENDIX}
In this Appendix, we provide inequalities for projection operators $Q_h$, $\bQ_h$, $\dQ_h$ and inequality estimates used in the previous paper.
\begin{lemma}
    \cite{MESH} Let $\T_h$ be a shape regular partition of $\Omega$, $\bw\in[H^{k+1}(\Omega)]^d$ and $\rho\in H^k(\Omega)$. Then we have the following projection inequalities
    \begin{align}
        \sumT\|\bw-Q_h\bw\|^2_T&\le Ch^{2(k+1)}\|\bw\|^2_{k+1}, \label{proj_ineq1}\\
        \sumT\|\nabla\bw-\bQ_h(\nabla\bw)\|^2_T&\le Ch^{2k}\|\bw\|^2_{k+1}, \label{proj_ineq2}\\
        \sumT\|\rho-\dQ_h\rho\|^2_T&\le Ch^{2k}\|\rho\|_k^2. \label{proj_ineq3}
    \end{align}
    where $C$ is a constant independent of the size of mesh $h$ and the functions $\bw$ and $\rho$. 
\end{lemma}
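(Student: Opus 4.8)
The plan is to prove each of the three inequalities cellwise and then sum, using nothing beyond the shape-regularity hypotheses of \cite{MESH2} and classical polynomial approximation theory. The key observation is that $Q_h$, $\bQ_h$ and $\dQ_h$ are $L^2$-orthogonal projections, so on a fixed $T\in\T_h$ each of them realizes the best $L^2(T)$-approximation within $[P_k(T)]^d$, $[P_j(T)]^{d\times d}$ and $P_{k-1}(T)$ respectively. Hence it suffices to exhibit, for each target function and each cell, one polynomial of the appropriate degree whose $L^2(T)$-distance to the function already attains the claimed order in $h_T$.

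First I would fix $T$ and take $\pi_T\bw\in[P_k(T)]^d$ to be the averaged Taylor polynomial of $\bw$ over a ball of radius comparable to $h_T$ contained in a star-shaped core of $T$; the Bramble--Hilbert / Dupont--Scott estimate then yields $\|\bw-\pi_T\bw\|_T\le Ch_T^{k+1}\,|\bw|_{k+1,T}$ with $C$ depending only on the shape-regularity constants. By optimality of the projection, $\|\bw-Q_h\bw\|_T\le\|\bw-\pi_T\bw\|_T$, and squaring, summing over $T\in\T_h$, and using $h_T\le h$ gives (\ref{proj_ineq1}). Inequality (\ref{proj_ineq3}) follows in exactly the same way with $\bw$, $k+1$, $P_k$ replaced by $\rho$, $k$, $P_{k-1}$. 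For (\ref{proj_ineq2}) I would apply the same machinery to the matrix field $\nabla\bw$, which belongs to $[H^k(T)]^{d\times d}$ since $\bw\in[H^{k+1}(T)]^d$; because $j>k$ we have $[P_{k-1}(T)]^{d\times d}\subset[P_j(T)]^{d\times d}$, so the best $[P_j(T)]^{d\times d}$-approximation of $\nabla\bw$ is controlled by the degree-$(k-1)$ one, namely $Ch_T^{k}\,|\nabla\bw|_{k,T}=Ch_T^{k}\,|\bw|_{k+1,T}$, and summation concludes.

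The only point requiring real care is that the cells are arbitrary polygons, so there is no single affine reference element to scale from; the independence of $C$ from $h$ and from the particular cell must come from the assumptions in \cite{MESH2} (each $T$ star-shaped with respect to a ball of radius $\gtrsim h_T$, a bounded number of edges, edge lengths comparable to $h_T$), which is precisely the setting in which the averaged Taylor estimate holds uniformly over the mesh family. Granting that, the remaining work is routine. Since the statement is quoted verbatim from \cite{MESH}, in the paper we simply cite it rather than reproduce this argument.
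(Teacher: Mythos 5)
Your argument is correct and is exactly the standard proof of these estimates: since $Q_h$, $\bQ_h$, $\dQ_h$ are $L^2$-orthogonal projections, it suffices to bound the best polynomial approximation error cellwise via the averaged-Taylor/Bramble--Hilbert estimate on star-shaped polygons, with the observation that $[P_{k-1}(T)]^{d\times d}\subset[P_j(T)]^{d\times d}$ handling the higher-degree space in (\ref{proj_ineq2}). The paper itself offers no proof and simply cites \cite{MESH}, so there is nothing to compare against beyond noting that your reconstruction matches the argument in that reference.
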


Let $T$ be a cell with $e$ as an edge/face. For any function $\rho\in H^1(T)$, the following trace inequality has been proved to be valid in \cite{MESH}:
\begin{align}
    \|\rho\|^2_e\le C(h^{-1}_T\|\rho\|^2_T+h_T\|\nabla\rho\|^2_T). \label{trace_ineq}
\end{align}

Furthermore, if $\rho\in P_k(T)$, we have the following inverse inequality
\begin{align}
    \|\nabla\rho\|_T\le Ch_T^{-1}\|\rho\|_T. \label{inverse_ineq}
\end{align}

\begin{lemma}
    For any $\bv\in V_h$, the following inequality holds true
    \begin{align}
        \sum_{e\in\E_h}h^{-1}\|[\bv]\|^2_e\le C\trb{\bv}^2, \label{CDG:Ineq_w}
    \end{align}
    where $C$ is a positive constant.
\end{lemma}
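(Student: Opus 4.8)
The plan is to bound the jump term $\sum_{e}h^{-1}\|[\bv]\|_e^2$ by a quantity that only involves the weak gradient $\nabla_w\bv$ and the $L^2$ mass of $\bv$, which together make up $\trb{\bv}^2$. First I would recall from the proof of Lemma \ref{CDG:lemma_infsup} the identity coming from the definition of $\nabla_w$,
\begin{align*}
    \|\nabla_w\bv\|^2=\sumT\bigl((\nabla\bv,\nabla\bv)_T-\seq{\bv-\set{\bv},(\nabla_w\bv+\nabla\bv)\cdot\bn}_{\partial T}\bigr),
\end{align*}
together with the chain of estimates already carried out there, which yields the key inequality
\begin{align*}
    \sumT\|\nabla\bv\|_T^2\le\sumT\Bigl(\tfrac12\|\nabla_w\bv\|_T^2+Ch^{-2}\|\bv\|_T^2\Bigr),
\end{align*}
so in particular $\|\nabla_h\bv\|^2 := \sumT\|\nabla\bv\|_T^2 \le C(\|\nabla_w\bv\|^2 + h^{-2}\|\bv\|^2)$. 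This controls the broken $H^1$ seminorm of $\bv$ by $\trb{\bv}$ up to a negative power of $h$; the crude $h^{-2}\|\bv\|^2$ term is harmless because on each cell $\|\bv\|_T^2 \le \lambda_2^{-1}\|\kappa^{-1/2}\bv\|_T^2 \cdot$ (something), but actually I will not even need it — see below.

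Next I would relate the jump to the cellwise quantities by the trace inequality (\ref{trace_ineq}). For $e\in\E_h^0$ shared by $T_1,T_2$, using (\ref{v_averjump}) we have $\|[\bv]\|_e = 2\|\bv-\set{\bv}\|_e \le \|\bv|_{T_1}-\bv|_{T_2}\|_e \le \|\bv\|_{\partial T_1\cap e}+\|\bv\|_{\partial T_2\cap e}$, and for boundary edges $\|[\bv]\|_e=\|\bv\|_e$. Hence
\begin{align*}
    \sum_{e\in\E_h}h^{-1}\|[\bv]\|_e^2 \le C\sumT h^{-1}\|\bv\|_{\partial T}^2 \le C\sumT\bigl(h^{-2}\|\bv\|_T^2 + \|\nabla\bv\|_T^2\bigr),
\end{align*}
where the last step is the trace inequality (\ref{trace_ineq}) with $h_T\simeq h$. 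Substituting the broken $H^1$ bound above gives $\sum_{e}h^{-1}\|[\bv]\|_e^2 \le C(\|\nabla_w\bv\|^2 + h^{-2}\|\bv\|^2)$, which is not yet $C\trb{\bv}^2$ because of the offending $h^{-2}$.

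The main obstacle is therefore removing that extra $h^{-2}$ factor on the $\|\bv\|^2$ term. I expect this is resolved exactly as in Lemma \ref{CDG:lemma_infsup}: rather than estimating $\|\nabla\bv\|_{\partial T}$-type terms by the inverse inequality and dumping everything onto $h^{-2}\|\bv\|_T^2$, one keeps the term $\frac12\|\nabla_w\bv\|_T^2$ and absorbs it, but crucially the jump $\|\bv-\set{\bv}\|_{\partial T}$ that appears is itself $\tfrac12\|[\bv]\|$-type, so the argument can be arranged to produce $\sum_e h^{-1}\|[\bv]\|_e^2$ on both sides and absorb. Concretely, from the $\nabla_w$ identity, $\seq{\bv-\set{\bv},(\nabla_w\bv+\nabla\bv)\cdot\bn}_{\partial T}$ is bounded via Cauchy–Schwarz and the inverse inequality by $\varepsilon\sum_e h^{-1}\|[\bv]\|_e^2 + C_\varepsilon(\|\nabla_w\bv\|^2+\|\nabla_h\bv\|^2)$, and one shows $\|\nabla_h\bv\|^2\le C\|\nabla_w\bv\|^2 + \varepsilon\sum_e h^{-1}\|[\bv]\|_e^2$; combining and choosing $\varepsilon$ small enough to absorb the jump into the left side yields $\sum_e h^{-1}\|[\bv]\|_e^2 \le C\|\nabla_w\bv\|^2 \le C\trb{\bv}^2$. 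The $(\kappa^{-1}\bv,\bv)$ piece of $\trb{\bv}^2$ is then simply discarded (it is nonnegative by (\ref{prop_kappa})), so the final bound holds with $C$ depending only on shape regularity and the constant in (\ref{CDG:grad_v}).
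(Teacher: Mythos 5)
First, note that the paper does not actually prove this lemma: it simply defers to Lemma 3.2 of \cite{JMIN1}, so there is no internal proof to compare against and your attempt must stand on its own. Your preparatory steps are sound — the identity $\|\nabla_w\bv\|^2=\sumT((\nabla\bv,\nabla\bv)_T-\seq{\bv-\set{\bv},(\nabla_w\bv+\nabla\bv)\cdot\bn}_{\partial T})$ is correct, and so is the reduction $\sum_{e\in\E_h}h^{-1}\|[\bv]\|_e^2\le C\sumT(h^{-2}\|\bv\|_T^2+\|\nabla\bv\|_T^2)$ via (\ref{v_averjump}) and (\ref{trace_ineq}); you also correctly diagnose that the $h^{-2}\|\bv\|_T^2$ term is the obstruction. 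But the proposed repair, ``produce $\sum_e h^{-1}\|[\bv]\|_e^2$ on both sides and absorb,'' does not go through. In every inequality you actually derive, the jump quantity $X:=\sum_e h^{-1}\|[\bv]\|_e^2$ appears only on the \emph{right-hand} side, as the Young-inequality upper bound $\varepsilon h^{-1}\|\bv-\set{\bv}\|_{\partial T}^2+C_\varepsilon(\cdots)$ for the boundary pairing; the left-hand sides are $\|\nabla_w\bv\|^2$ or the broken seminorm $\sumT\|\nabla\bv\|_T^2$. Absorption requires an inequality of the form $X\le CY+\varepsilon X$ with $X$ itself on the left, and the only such inequality available to you is the trace-inequality bound above, whose remainder is $h^{-2}\|\bv\|_T^2$, not $\varepsilon X$. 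So the argument circles back to the estimate you already rejected, and the $h^{-2}$ never disappears.

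The missing ingredient is precisely the one that makes Lemma 3.2 of \cite{JMIN1} nontrivial: a lifting (norm-equivalence) argument that uses the hypothesis $j=n+k-1>k$ in (\ref{CDG:grad_v}). For each $T$ one constructs $\tau\in[P_j(T)]^{d\times d}$ with $\tau\bn=\bv-\set{\bv}$ on $\partial T$ and $\|\tau\|_T\le Ch^{1/2}\|\bv-\set{\bv}\|_{\partial T}$ (the degree $j$ must grow with the number $n$ of edges for such a lifting to exist on a polygon); inserting this $\tau$ into (\ref{CDG:grad_v}) gives $\|\bv-\set{\bv}\|_{\partial T}^2=(\nabla\bv-\nabla_w\bv,\tau)_T\le Ch^{1/2}\|\bv-\set{\bv}\|_{\partial T}(\|\nabla\bv\|_T+\|\nabla_w\bv\|_T)$, hence $h^{-1}\|\bv-\set{\bv}\|_{\partial T}^2\le C(\|\nabla\bv\|_T^2+\|\nabla_w\bv\|_T^2)$, and only then does your estimate $\sumT\|\nabla\bv\|_T^2\le C\|\nabla_w\bv\|^2+\varepsilon X$ close the loop. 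Your proposal never invokes the degree condition on $j$, and the statement is false without it (for $j$ too small one can have $\nabla_w\bv=0$ while $\bv$ has nonzero interelement jumps), so no argument that omits it can be complete. A smaller point worth recording: as stated for all $\bv\in V_h$ the lemma also fails on boundary edges (a nonzero constant $\bv$ has $\nabla_w\bv=0$ but $\sum_{e\subset\partial\Omega}h^{-1}\|[\bv]\|_e^2=O(h^{-1})$), so the intended hypothesis is $\bv\in V_h^0$; that is an issue with the paper's statement rather than with your proof.
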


The proof of this lemma is given in Lemma 3.2 in \cite{JMIN1}.

\begin{lemma} \label{CDG:lemma_error_ineq}
    For any $\bw\in[H^{k+1}(\Omega)]^d$, $r\in H^k(\Omega)$ $\bv\in V_h$ and $q\in W_h$. Then we have 
    \begin{align}
        |l_1(\bw,\bv)|&\le Ch^k\|\bw\|_{k+1}\trb{\bv}, \label{CDG:Ineq_l4}\\
        |l_2(\bw,\bv)|&\le Ch^k\|\bw\|_{k+1}\trb{\bv}, \label{CDG:Ineq_l5}\\
        |l_3(r,\bv)|&\le Ch^k\| r\|_k\trb{\bv}, \label{CDG:Ineq_l6}\\
        |l_4(\bw,q)|&\le Ch^k\|\bw\|_{k+1}\|q\|_h, \label{CDG:Ineq_l7}\\
        |s(\dQ_hr,q)|&\le Ch^k\|r\|_k\|q\|_h.\label{CDG:Ineq_s} 
    \end{align}
\end{lemma}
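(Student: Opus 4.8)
The plan is to estimate each of the five quantities separately, in each case inserting the exact function's $L^2$-projection to exploit the projection inequalities (\ref{proj_ineq1})--(\ref{proj_ineq3}) from the appendix, together with the trace inequality (\ref{trace_ineq}) and the norm equivalences (\ref{v_averjump})--(\ref{q_averjump}). The recurring mechanism is the same: split an edge term as a sum over $\partial T$, apply Cauchy--Schwarz on $\partial T$ with a carefully chosen power of $h$ distributed between the two factors, bound the ``data'' factor by a projection inequality and the ``test-function'' factor by $\trb{\bv}$ or $\|q\|_h$.

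\textbf{Step 1 ($l_1$).} By (\ref{CDG:Eq_proj1}), $\nabla_w(\bw - Q_h\bw) = \bQ_h(\nabla\bw) - \nabla_w(Q_h\bw)$, but it is cleaner to write $l_1(\bw,\bv) = \sumT(\nabla_w(\bw-Q_h\bw),\nabla_w\bv)_T$ and apply Cauchy--Schwarz over the cells, so $|l_1(\bw,\bv)| \le \big(\sumT\|\nabla_w(\bw-Q_h\bw)\|_T^2\big)^{1/2}\trb{\bv}$. It then remains to bound $\|\nabla_w(\bw-Q_h\bw)\|_T$. Using the estimate $\|\nabla_w\bv\|^2 \le \sumT(\tfrac12\|\nabla_w\bv\|_T^2 + Ch^{-2}\|\bv\|_T^2)$ already derived inside the proof of Lemma~\ref{CDG:lemma_infsup}, applied to $\bv = \bw - Q_h\bw$ together with the interpolation argument there (or more directly: $\nabla_w(\bw-Q_h\bw)$ reproduces $\nabla(\bw - Q_h\bw)$ modulo boundary terms controlled by the trace inequality), one gets $\|\nabla_w(\bw-Q_h\bw)\|_T^2 \le C(\|\nabla\bw - \bQ_h\nabla\bw\|_T^2 + h^{-1}\|\bw - Q_h\bw\|_{\partial T}^2)$; then (\ref{proj_ineq1})--(\ref{proj_ineq2}) and (\ref{trace_ineq}) give the $h^{2k}\|\bw\|_{k+1}^2$ bound.

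\textbf{Step 2 ($l_2$, $l_3$, $l_4$).} For $l_2$, write $l_2(\bw,\bv) = \sumT\seq{(\nabla\bw-\bQ_h\nabla\bw)\cdot\bn,\bv-\set{\bv}}_{\partial T}$ and apply Cauchy--Schwarz on each $\partial T$:
\begin{align*}
|l_2(\bw,\bv)| \le \Big(\sumT h\|\nabla\bw-\bQ_h\nabla\bw\|_{\partial T}^2\Big)^{1/2}\Big(\sumT h^{-1}\|\bv-\set{\bv}\|_{\partial T}^2\Big)^{1/2}.
\end{align*}
The first factor is bounded by $Ch^k\|\bw\|_{k+1}$ via the trace inequality and (\ref{proj_ineq2}) (the factor $h$ against $h^{-1}$ from the trace inequality leaves $\|\nabla\bw-\bQ_h\nabla\bw\|_T^2$ plus a higher-order term), and the second factor equals a constant times $\big(\sum_{e\in\E_h}h^{-1}\|[\bv]\|_e^2\big)^{1/2}$ by (\ref{v_averjump}), which is $\le C\trb{\bv}$ by (\ref{CDG:Ineq_w}). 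The estimate for $l_3$ is identical with $\nabla\bw - \bQ_h\nabla\bw$ replaced by $p - \dQ_h p$ (using (\ref{proj_ineq3}) and the fact that $\seq{p-\dQ_hp,\set{\bv}\cdot\bn}$ telescopes since $\set{\bv}\cdot\bn$ is single-valued and $\bv\in V_h^0$, so only the jump $\bv - \set\bv$ survives). For $l_4$, the roles swap: $|l_4(\bw,q)| \le \big(\sumT h^{-1}\|\bw - Q_h\bw\|_{\partial T}^2\big)^{1/2}\big(\sumT h\|q-\set{q}\|_{\partial T}^2\big)^{1/2}$; the first factor is $Ch^k\|\bw\|_{k+1}$ by trace and (\ref{proj_ineq1}), and the second equals a constant times $\big(\sumE h\|[\![q]\!]_e\|^2\big)^{1/2} = C\|q\|_h$ by (\ref{q_averjump}) and definition (\ref{CDG:q_h}).

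\textbf{Step 3 ($s$).} For $s(\dQ_hr,q) = \sumE h\seq{[\![\dQ_hr]\!],[\![q]\!]}_e$, note that since $r\in H^k(\Omega)\subset H^1(\Omega)$ is continuous across interior edges, $[\![r]\!]_e = \b0$, hence $[\![\dQ_hr]\!]_e = [\![\dQ_hr - r]\!]_e$; then Cauchy--Schwarz over edges gives $|s(\dQ_hr,q)| \le \big(\sumE h\|[\![\dQ_hr-r]\!]\|_e^2\big)^{1/2}\|q\|_h \le C\big(\sumT h\|r - \dQ_h r\|_{\partial T}^2\big)^{1/2}\|q\|_h \le Ch^k\|r\|_k\|q\|_h$ by the trace inequality and (\ref{proj_ineq3}).

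\textbf{Main obstacle.} The only non-routine point is Step~1: controlling $\|\nabla_w(\bw-Q_h\bw)\|_T$ by projection errors, since $\nabla_w$ is defined via a $[P_j(T)]^{d\times d}$ test space with $j > k$ and does not simply equal $\nabla(\bw-Q_h\bw)$ on $P_j$. One must go back to definition (\ref{CDG:grad_v}), integrate by parts to produce $(\nabla(\bw-Q_h\bw),\tau)_T + \seq{\set{\bw-Q_h\bw} - (\bw - Q_h\bw),\tau\bn}_{\partial T}$, take $\tau = \nabla_w(\bw-Q_h\bw)$, and absorb using the inverse inequality (\ref{inverse_ineq}) exactly as in the chain of inequalities inside the proof of Lemma~\ref{CDG:lemma_infsup}; the remaining boundary term is handled by (\ref{v_averjump}) and the trace inequality. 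Everything else is a bookkeeping exercise in distributing powers of $h$.
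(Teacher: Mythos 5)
Your proposal follows the paper's proof essentially term by term: the same five separate estimates, the same Cauchy--Schwarz splittings with $h^{\pm1}$ weights, the trace inequality combined with (\ref{proj_ineq1})--(\ref{proj_ineq3}), the identities (\ref{v_averjump})--(\ref{q_averjump}) to convert $\bv-\set{\bv}$ and $q-\set{q}$ into jumps, and (\ref{CDG:Ineq_w}) to absorb $\sum_e h^{-1}\|[\bv]\|_e^2$ into $\trb{\bv}^2$. The only cosmetic difference is in $l_1$: you first peel off $\trb{\bv}$ by Cauchy--Schwarz and then bound $\|\nabla_w(\bw-Q_h\bw)\|_T$ as a standalone quantity via the definition (\ref{CDG:grad_v}) with $\tau=\nabla_w(\bw-Q_h\bw)$ and absorption, whereas the paper keeps $\tau=\nabla_w\bv$ in the definition, integrates by parts, and only then applies Cauchy--Schwarz. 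The two variants are equivalent and land on the same bound.

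The one step that does not survive scrutiny as you wrote it is the reduction of $l_3$ to a jump term. You claim $\sumT\seq{p-\dQ_hp,\set{\bv}\cdot\bn}_{\partial T}$ ``telescopes since $\set{\bv}\cdot\bn$ is single-valued.'' Telescoping of $\sumT\seq{c,\set{\bv}\cdot\bn}_{\partial T}$ over an interior edge requires the \emph{other} factor $c$ to be single-valued there (so that the opposite normals cancel); but $\dQ_hp$ is computed cell by cell and jumps across interior edges, so what survives is $\sum_{e\in\E_h^0}\seq{\dQ_hp|_{T_2}-\dQ_hp|_{T_1},\set{\bv}\cdot\bn_1}_e$. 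This residue is \emph{not} higher order: with only $r\in H^k$ one has $\|\dQ_hp|_{T_1}-\dQ_hp|_{T_2}\|_e\le Ch^{k-1/2}\|p\|_{k,\omega_e}$, while $\|\set{\bv}\|_e$ is only controlled by $Ch^{-1/2}\|\bv\|_{\omega_e}$ (the tri-bar norm controls jumps of $\bv$, not averages), so this piece costs $O(h^{k-1})$ and your argument, taken literally, loses an order. The correct mechanism is different: the single-valued object on each interior edge is the \emph{average} $\set{\dQ_hp}$, which is exactly what enters the definition (\ref{CDG:grad_q}) of $\tilde\nabla_w$; tracking it through the error-equation derivation, the boundary residue that actually needs to be estimated is $\sum_e\seq{\set{p-\dQ_hp},[\bv]}_e$, i.e.\ average of the projection error against the jump of $\bv$, and that quantity is bounded by $Ch^k\|p\|_k\trb{\bv}$ exactly as you (and the paper) compute. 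Be aware that the paper itself writes $l_3$ with the full trace $\bv\cdot\bn$ and silently pairs it with $\|[\bv]\|_e$, so your proposal inherits a gap that is already present there; but your stated justification for bridging it is the wrong one, and you should replace it with the average-versus-jump decomposition above.
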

\begin{proof}
    Using the definition of $\nabla_w$, the Cauchy-Schwarz inequality, the trace inequality (\ref{trace_ineq}) and the projection inequality (\ref{proj_ineq1}), we obtain
    \begin{align*}
        |l_1(\bw,\bv)|&=\Bigg|\sumT(\nabla_w(\bw-Q_h\bw),\nabla_w\bv)_T\Bigg|\\
        &=\Bigg|\sumT(-(\bw-Q_w\bw,\nabla\cdot\nabla_w\bv)_T+\seq{\set{\bw-Q_w\bw},\nabla_w\bv\cdot\bn}_{\partial T})\Bigg|\\
        &=\Bigg|\sumT((\nabla(\bw-Q_w\bw),\nabla_w\bv)_T+\seq{ Q_w\bw-\set{Q_w\bw},\nabla_w\bv\cdot\bn}_{\partial T})\Bigg|\\
        &=\Bigg|\sumT((\nabla(\bw-Q_w\bw),\nabla_w\bv)_T+\seq{ Q_w\bw-\bw+\set{\bw-Q_w\bw},\nabla_w\bv\cdot\bn}_{\partial T})\Bigg|\\
        &\le\sumT(\|\nabla(\bw-Q_w\bw)\|_T\|\nabla_w\bv\|_T+Ch^{-\frac{1}{2}}\|\bw-Q_w\bw\|_{\partial T}\|\nabla_w\bv\|_T)\\
        &\le\left(\sumT(\|\nabla(\bw-Q_w\bw)\|_T+Ch^{-1}\|\bw-Q_w\bw\|_T)^2\right)^{\frac{1}{2}}\left(\sumT\|\nabla_w\bv\|_T^2\right)^{\frac{1}{2}}\\
        &\le Ch^k\|\bw\|_{k+1}\trb{\bv}.
    \end{align*}
    Combining (\ref{CDG:Ineq_w}), (\ref{v_averjump}) and the projection inequality (\ref{proj_ineq2}) gives
    \begin{align*}
        |l_2(\bw,\bv)|&=\Bigg|\sumT\seq{(\nabla\bw-\bQ_h\nabla\bw)\cdot\bn,\bv-\set{\bv}}_{\partial T}\Bigg|\\
        &\le C\left(\sumT h\|\nabla\bw-\bQ_h\nabla\bw\|_{\partial T}^2\right)^{\frac{1}{2}}\left(\sum_{e\in\E_h}h^{-1}\|[\bv]\|_e^2\right)^{\frac{1}{2}}\\
        &\le Ch^k\|\bw\|_{k+1}\trb{\bv}.
    \end{align*}
    According to the projection inequality (\ref{proj_ineq3}), we get
    \begin{align*}
        |l_3(r,\bv)|&=\Bigg|\sumT\seq{r-\dQ_hr,\bv\cdot\bn}_{\partial T}\Bigg|\\
        &\le C\left(\sumT h\|r-\dQ_hr\|_{\partial T}^2\right)^{\frac{1}{2}}\left(\sum_{e\in\E_h}h^{-1}\|[\bv]\|_e^2\right)^{\frac{1}{2}}\\
        &\le Ch^k\|r\|_k\trb{\bv}.
    \end{align*}
    Similarly, with (\ref{q_averjump}), then
    \begin{align*}
        |l_4(\bw,q)|&=\Bigg|\sumT\seq{(\bu-Q_h\bu)\cdot\bn,q-\set{q}}_{\partial T}\Bigg|\\
        &\le C\left(\sumT h^{-1}\|\bw-Q_h\bw\|_{\partial T}^2\right)^{\frac{1}{2}}\left(\sumE h\|[\![q]\!]\|_e^2\right)^{\frac{1}{2}}\\
        &\le Ch^k\|\bw\|_{k+1}\|q\|_h.
    \end{align*}
    Using the definition of $s(\cdot,\cdot)$, we get
    \begin{align*}
        |s(\dQ_hr,q)|&=\Bigg|\sumE h\seq{[\![\dQ_hr]\!],[\![q]\!]}_e\Bigg|\\
        &=\Bigg|\sumE h\seq{[\![\dQ_hr-r]\!],[\![q]\!]}_e\Bigg|\\
        &\le C\left(\sumT h\|r-\dQ_hr\|_{\partial T}^2\right)^{\frac{1}{2}}\left(\sumE h\|[\![q]\!]\|_e^2\right)^{\frac{1}{2}}\\
        &\le Ch^k\|r\|_k\|q\|_h, 
    \end{align*}
    which completes the proof.
\end{proof}

\end{appendices}

\section*{Acknowledgments}
This work was supported by National Natural Science Foundation of China (Grant No. 1901015, 12271208), and Key Laboratory of Symbolic Computation and Knowledge Engineering of Ministry of Education, Jilin University. 
We sincerely thank the anonymous reviewers for their insightful comments, which have helped improve the quality of this paper.




\begin{thebibliography}{00}



\bibitem{JMIN2}
A. Al-Taweel, and X. Wang, A note on the optimal degree of the weak gradient of the stabilizer free weak Galerkin finite element method, Appl. Numer. Math., 150 (2020), 444-451.

\bibitem{velocity-vorticity-pressure}
V. Anaya, G. N. Gatica, D. Mora and R. Ruiz-Baier, An augmented velocity-vorticity-pressure formulation for the Brinkman equations, Internat. J. Numer. Methods Fluids, 79 (2015), 109-137.

\bibitem{Avramenko2020}
A. A. Avramenko, I. V. Shevchuk, M. M. Kovetskaya, and Y. Y. Kovetska, Darcy-Brinkman-forchheimer model for film boiling in porous media, Transp. Porous Media, 134 (2020), 503-536.

\bibitem{Stokesbased1}
S. Badia, and R. Codina, Unified stabilized finite element formulations for the Stokes and the Darcy problems, SIAM J. Numer. Anal., 47 (2009), 1971-2000.

\bibitem{Brinkman1949ACO}
H. Brinkman, A calculation of the viscous force exerted by a flowing fluid on a dense swarm of particles, Flow Turbul. Combust., 1 (1949), 27-34.

\bibitem{doi:10.1142/S0218202517500142}
E. Cáceres, G. N. Gatica, and F. A. Sequeira, A mixed virtual element method for the Brinkman problem, Math. Models Methods Appl. Sci., 27 (2017), 707-743.

\bibitem{Darcybased2}
B. Cockburn, G. Kanschat and D. Sch{\"o}tzau, A note on discontinuous Galerkin divergence-free solutions of the Navier-Stokes equations, J. Sci. Comput., 31 (2007), 61–73.

\bibitem{Stokesbased2}
C. D’Angelo and P. Zunino, A finite element method based on weighted interior penalties for heterogeneous incompressible flows, SIAM J. Numer. Anal., 47 (2009), 3990-4020.

\bibitem{FERREIRA2020103696}
L. de Paulo Ferreira, T. D. S. de Oliveira, R. Surmas, M. A. P. {da Silva}, and R. P. Peçanha, Brinkman equation in reactive flow: Contribution of each term in carbonate acidification simulations, Adv. Water. Resour., 144 (2020), 103696.

\bibitem{fractured}
F. Golfier, D. Lasseux, and M. Quintard, Investigation of the effective permeability of vuggy or fractured porous media from a Darcy-Brinkman approach, Comput. Geosci., 19 (2015), 63-78.

\bibitem{dualmixed}
J. S. Howell, M. Neilan and N. J. Walkington, A dual-mixed finite element method for the Brinkman problem, SMAI J. Comput. Math., 2 (2016), 1-17.

\bibitem{Darcybased}
P. Huang and J. Chen, Some low order nonconforming mixed finite elements combined with Raviart-Thomas elements for a coupled Stokes-Darcy model, Jpn. J. Ind. Appl. Math., 30 (2013), 565–584.


\bibitem{doi:10.1142/S0218202511005726}
J. K{\"o}nn{\"o} and R. Stenberg, H(div)-conforming finite elements for the Brinkman problem, Math. Models Methods Appl. Sci. , 21 (2011), 2227-2248.

\bibitem{CarbonateReservoirs}
I. Ligaarden, M. Krotkiewski, K. A. Lie, M. Pal and D. W. Schmid, On the Stokes-Brinkman equations for modeling flow in carbonate reservoirs, Oxford: Proceedings of the the ECMOR XII - 12th European Conferenceon the mathematics of Oil Recovery, 2010.

\bibitem{M2002}
K. A. Mardal, X. C. Tai and R. Winther, A Robust Finite Element Method for Darcy - Stokes Flow, SIAM J. Numer. Anal., 40 (2002), 1605-1631.

\bibitem{BRINK1}
L. Mu, J. Wang, and X. Ye, A stable numerical algorithm for the Brinkman equations by weak Galerkin finite element methods, J. Comput. Phys., 273 (2014), 327-342.

\bibitem{QIAN2020113177}
Y. Qian, S. Wu, and F. Wang, A mixed discontinuous Galerkin method with symmetric stress for Brinkman problem based on the velocity-pseudostress formulation, Comput. Methods Appl. Mech. Engrg., 368 (2020), 113177.

\bibitem{biotechnology}
K. Vafai, Porous media: applications in biological systems and biotechnology. Springer Series in Computational Mathematics, 2010, CRC Press, USA.

\bibitem{Wang2021}
G. Wang, Y. Wang, and Y. He, Two robust virtual element methods for the Brinkman equations, Calcolo, 58 (2021), 49.

\bibitem{ELLIPTIC}
J. Wang and X. Ye, A weak Galerkin finite element method for second-order elliptic problems, J. Comput. Appl. Math., 241 (2013), 103-115.

\bibitem{MESH2}
J. Wang and X. Ye, A weak Galerkin finite element method for the stokes equations, Adv. Comput. Math., 42 (2016), 155–174.

\bibitem{MESH}
J. Wang and X. Ye, A weak Galerkin mixed finite element method for second order elliptic problems, Math. Comp., 83 (2014), 2101-2126.

\bibitem{CDGellipticinterface}
Y. Wang, F. Gao, and J. Cui, A conforming discontinuous Galerkin finite element method for elliptic interface problems, J. Comput. Appl. Math., 412 (2022), 114304.

\bibitem{CDGLinearElasticityInterface}
Y. Wang, F. Gao, and J. Cui, A conforming discontinuous Galerkin finite element method for linear elasticity interface problems, J. Sci. Comput., 92 (2022), 20.

\bibitem{Stokesbased3}
X. Xie, J. Xu and G. Xue, Uniformly-stable finite element methods for Darcy-Stokes-Brinkman models, J. Comput. Math., 26 (2008), 437-455.

\bibitem{Ye2020110}
X. Ye and S. Zhang, A conforming discontinuous Galerkin finite element method, Int. J. Numer. Anal. Model., 17 (2020), 110-117.

\bibitem{JMIN1}
X. Ye and S. Zhang, A conforming discontinuous Galerkin finite element method: Part II, Int. J. Numer. Anal. Model., 17 (2020), 281-296.

\bibitem{cdgstokes}
X. Ye and S. Zhang, A conforming discontinuous Galerkin finite element method for the Stokes problem on polytopal meshes, Internat. J. Numer. Methods Fluids, 93 (2021), 1913-1928.

\bibitem{BRINK2}
Q. Zhai, R. Zhang, and L. Mu,  A new weak Galerkin finite element scheme for the Brinkman model, Commun. Comput. Phys., 19 (2016), 1409-1434.

\bibitem{ZHANG2021126325}
X. Zhang and M. Feng, A projection-based stabilized virtual element method for the unsteady incompressible Brinkman equations, Appl. Math. Comp., 408 (2021), 126325.

\end{thebibliography}



\end{document}